\documentclass[11pt]{amsart}

\usepackage[margin=2cm]{geometry}

\usepackage{latexsym, amssymb, amsfonts, amsmath, amsthm, graphics, graphicx, bbm, stmaryrd,xcolor}
\usepackage[pdftex,colorlinks,linkcolor=blue,menucolor=red,backref=false,bookmarks=true,citecolor=olive]{hyperref}

\usepackage{palatino}


\newtheorem{theorem}{Theorem}[section]
\newtheorem{lemma}[theorem]{Lemma}
\newtheorem{corollary}[theorem]{Corollary}

\theoremstyle{definition}

\theoremstyle{remark}
\newtheorem{remark}[theorem]{Remark}

\newcommand{\field}[1]{\mathbb{#1}}
\newcommand{\R}{\field{R}}

\newcommand{\E}{\field{E}}

\newcommand{\tr}{\mathrm{tr}}

\newcommand{\mat}[2][rrrrrrrrrrrrrrrrrrrrrrrrrrrrrrrrrrrrrrrrrrrrrrrrrrr]{\left[ \begin{array}{#1} #2 \\ \end{array}\right]}

\allowdisplaybreaks

\numberwithin{equation}{section}
\begin{document}

\title[Couplings of Brownian Motions of deterministic distance]{Couplings of Brownian Motions of deterministic distance in model spaces of constant curvature}


\author{Mihai N. Pascu}
\address{"Transilvania" University of Bra\c{s}ov, Faculty of Mathematics and Computer Science, Str. Iuliu Maniu 50, Bra\c{s}ov 500091, ROMANIA.}
\email{mihai.pascu@unitbv.ro}
\thanks{The first author kindly acknowledges the support by a grant of the Romanian National Authority for Scientific Research, CNCS - UEFISCDI, project number PNII-ID-PCCE-2011-2-0015, and the second author the support by the Marie Curie Action Grant PIRG.GA.2009.249200 and Simons Collaboration Grant no.  318929. }

\author{Ionel Popescu}
\address{School of Mathematics, Georgia Institute of Technology, 686 Cherry Street, Atlanta, GA 30332, USA and ``Simion Stoilow'' Institute of Mathematics   of Romanian Academy, 21 Calea Grivi\c{t}ei, Bucharest, ROMANIA}
\email{ipopescu@math.gatech.edu,  ionel.popescu@imar.ro}

\subjclass[2010]{Primary 60J65. Secondary 60G99, 58J65.}

\keywords{Couplings, Brownian motion} 

\date{}

\dedicatory{}

\commby{Mark M. Meerschaert}

\begin{abstract}
We consider the model space $\field{M}^{n}_{K}$ of constant curvature $K$ and dimension $n\ge 1$  (Euclidean space for $K=0$, sphere for $K>0$ and hyperbolic space for $K<0$), and we show that given a function $\rho:[0,\infty)\to[0,\infty)$, $\rho(0)=d(x,y)$ there exists a co-adapted coupling $(X(t),Y(t))$ of Brownian motions on $\field{M}^{n}_{K}$ such that $\rho(t)=d(X(t),Y(t))$ for every $t\ge 0$ if and only if $\rho$ is continuous and satisfies for almost every $t\ge0$ the differential inequality
\[
-(n-1)\sqrt{K}\tan\left(\tfrac{\sqrt{K}\rho(t)}{2}\right)\le \rho'(t)\le -(n-1)\sqrt{K}\tan\left(\tfrac{\sqrt{K}\rho(t)}{2}\right)+\tfrac{2(n-1)\sqrt{K}}{\sin(\sqrt{K}\rho(t))}.
\]
In other words, we characterize all co-adapted couplings of Brownian motions on the model space $\field{M}^{n}_{K}$ for which the distance between the processes is deterministic. In addition, the construction of the coupling is explicit for every choice of $\rho$ satisfying the above hypotheses.
\end{abstract}

\maketitle


\section{Introduction }

The initial motivation for writing the present article was to investigate a stochastic version of the celebrated ``\emph{Lion and Man}'' problem of Rad\'{o} (\cite{Littlewood}), which asks for winning strategies in a game in which a Man is chased by a Lion in a circular arena (to make the game interesting, it is assumed that both trajectories are smooth and of unit speed). Substituting the circular arena with a model space, namely a manifold of constant curvature, and the smooth trajectories of the Man and the Lion by Brownian trajectories on this space, the problem becomes a question about the existence of Brownian couplings for which the distance between the two processes is bounded away from zero (of particular interest being the case when it is constant), respectively when it approaches (or becomes) zero.

In this paper the model spaces are classified by the curvature constant $K$.  In the case of $K=0$ this is the Euclidean space, in the case $K>0$ this is the sphere of radius $1/\sqrt{K}$ while in the case of $K<0$ is the hyperbolic space with a certain metric.

Regarding couplings, there are at least two cases of interest. The first is the case when one wishes to have a fast coupling time (the Lion's  strategy), and the second is the case when one desires a slow (infinite) coupling time (the Man's strategy; think also of the case of a Brownian target being chased by a Brownian missile). There are various notions of fast/slow couplings in the literature: see, e.g., \cite{Chen} for ``optimal couplings'', \cite{Burdzy-Kendall2} for ``efficient coupling'', or \cite{Hsu-Sturm} for ``maximal couplings''. In the case of Euclidean Brownian motions, the extremes of the coupling time are achieved by the \emph{mirror coupling}, respectively by the \emph{synchronous coupling}.

The \emph{mirror coupling} was first introduced in the Euclidean case by Lindvall and Rogers \cite{Lin-Rog}, and then extended to processes defined on manifolds by Cranston \cite{CranstonJFA} and Kendall \cite{Kendall5}, the so-called \emph{Cranston-Kendall mirror coupling}. For a recent extension of the mirror coupling to the case when the two (reflecting) Brownian motions live in different domains, see \cite{Pascu1}. It turns out that this coupling is a very useful and versatile construction when it comes to various geometric and analytic properties on manifolds. For instance, it was shown in \cite{Kendall5}, for the case of manifolds with Ricci curvature bounded uniformly from below by a positive constant, that mirror coupled Brownian motions always meet in finite time. Informally, the mirror coupling makes the Brownian motions move toward each other in the geodesic direction, and parallel to each other in the orthogonal direction.

Under \emph{synchronous coupling}, the Brownian motions move parallel to each other both in the geodesic direction and in the orthogonal direction. This coupling was used for example in \cite{Banuelos} to prove the validity of Hot Spots conjecture for obtuse triangles, or in \cite{MR2215664} to prove Harnack inequalities and heat kernel estimates on manifolds.  On a different note, parametrized families of Brownian motions were constructed in \cite{MR2790368} and \cite{Stroock2}.


In a different direction, the notion of \textit{shy coupling} of Brownian motions was introduced in \cite{Burdzy-Benjamini} and was further studied in \cite{Burdzy-Kendal} and \cite{Kendall}. It is a refinement of a slow coupling, for which with positive probability,  the distance between the two
processes stays positive for all times.   A stronger version of shyness ($\epsilon$-shyness, $\epsilon>0$) asserts that with positive probability, the distance between the processes is greater than $\epsilon$.  In this paper we use this latter version of shyness, in the stronger sense that the distance between the processes is greater than $\epsilon$ with probability $1$.

Thus, there are at least two cases of interest regarding Brownian couplings: on one hand is the case of couplings under which the Brownian motions get close to each other fast, and, on the other, is the case of couplings under which the particles eventually stay away from each other.

Our focus in this paper is the following problem.  Let $\rho$ be a continuous function of time with $\rho(0)$ being the initial distance between the Man and the Lion.   Does the Brownian Man (or Lion) have a strategy in this game, such that the distance between him and the Lion (or Man) at time $t$ is precisely $\rho(t)$?  Our goal is  to determine the class of functions $\rho$ for which this is possible together with a description of the coupling.

In the Euclidean case, given a function $\rho:[0,\infty)\to[0,\infty)$ we show that there exists a co-adapted coupling $\left(X(t),Y(t)\right)$ of Brownian motions such that $d(X(t),Y(t))=\rho (t)$ for all $t\ge0$ if and only if $\rho$ is continuous and satisfies a.e. the inequality
\[
0\le \rho'(t)\le \frac{n-1}{\rho(t)},\qquad t\ge0.
\]
In particular the only co-adapted coupling with deterministic non-increasing distance function is the translation coupling. This is the content of Theorem~\ref{t:01}.  As a consequence, the extreme growth of the distance function is achieved for the distance function $\rho(t)=\sqrt{\rho^{2}(0)+2(n-1)t}$. In terms of the stochastic version of the Lion and Man problem, this shows that there is no winning strategy for the Brownian Lion, while any co-adapted coupling with deterministic distance function is a winning strategy for the Brownian Man - the best choice is for the distance function which grows with square root in time.

The other case study is that of the unit sphere $\field{S}^{n}\subset \mathbb{R}^{n+1}$ ($n \ge 1$), and the analogous result is presented in Theorem~\ref{t:10}.  The content is that there exists a co-adapted coupling of Brownian motions $(X(t),Y(t))$ on $\field{S}^{n}$ such that $d(X(t),Y(t))=\rho(t)$ for all $t \ge 0$ if and only if $\rho:[0,\infty)\to[0,\infty)$ is continuous and satisfies a.e. the differential inequality
\[
-(n-1)\tan\left(\frac{\rho(t)}{2}\right)\le \rho'(t)\le -(n-1)\tan\left(\frac{\rho(t)}{2}\right)+\frac{2(n-1)}{\sin(\rho(t))}, \qquad t\ge0.
\]
An important instance of this result is the case when $\rho$ is a constant function, which gives the existence of a fixed-distance coupling of spherical Brownian motions. The two other interesting particular cases correspond to the cases of equality in the above inequality, being explicitly given by $\rho(t)=2\arcsin\left(e^{-(n-1)t/2}\sin(\rho(0)/2)\right)$, and $\rho(t)=2\arccos\left(e^{-(n-1)t/2}\cos(\rho(0)/2)\right)$. Both couplings are particular cases of shy couplings: in the former case the two Brownian motions approach each other exponentially fast, but do not couple, and in the latter case the two Brownian motions repel each other at an exponential rate to the maximum distance allowed on the sphere (the processes become antipodal in the limit).  In terms of the stochastic version of the Lion and Man problem, this shows that the Brownian Lion has a strategy which brings him within $\varepsilon>0$ from the Brownian Man, exponentially fast in time, and the Brownian Man has a strategy of safety, which increases his distance from the Brownian Lion to the maximum distance allowable on the sphere. Interestingly, the Brownian Man also has a strategy which keeps the Brownian Lion at fixed distance for all times, particularly frustrating from the point of view of the Brownian Lion.

The remaining case of a constant curvature manifold is that of the $n$-dimensional hyperbolic space $\field{H}^{n}$ ($n\ge1$).  In this case we show that there exists a coupling $(X(t),Y(t))$ of Brownian motions on $\field{H}^n$ with $d(X(t),Y(t))=\rho(t)$ if and only if $\rho:[0,\infty)\to[0,\infty)$ is a continuous function which satisfies almost everywhere the inequality
\[
(n-1)\tanh\left(\frac{\rho(t)}{2}\right)\le \rho'(t)\le (n-1)\tanh\left(\frac{\rho(t)}{2}\right)+\frac{2(n-1)}{\sinh(\rho(t))}, \qquad t\ge0.
\]
From the point of view of the Brownian Lion and the Brownian Man, in this scenario the Lion has no winning strategy, while the Man is always sure to get away from the Lion.  The interesting fact is that in this case the distance cannot grow exponentially fast, and in fact it has linear growth in time. To see this, from the previous inequality we obtain
\[
2\mathrm{arcsinh}(e^{(n-1)t/2}\sinh(\rho(0)/2))\le\rho(t)\le 2\mathrm{arccosh}(e^{(n-1)t/2}\cosh(\rho(0)/2)),
\]
which shows that $\rho(t)/t$ converges to $n-1$ as $t\to\infty$, and therefore the distance function $\rho(t)$ grows linearly in time for large times.

Denoting by $\field{M}^{n}_{K}$ the model space of the $n$-dimensional manifold of constant curvature $K$, i.e. the Euclidean space for $K=0$, the sphere of radius $1/\sqrt{K}$ for $K>0$, and the hyperbolic space in the case of $K<0$ (see Section \ref{s:p} for the details), we obtain the following unified form of our results given in the abstract.  This is the main result of this paper, and we include it here in a formal way.

\begin{theorem}\label{t:all}  For arbitrary distinct points $x,y\in \field{M}^{n}_{K}$ ($n\ge1$) and a given non-negative function $\rho:[0,\infty)\to[0,\infty)$ with $\rho(0)=d(x,y)$, there exists a co-adapted coupling of Brownian motions  $(X(t),Y(t))$ on $\field{M}^{n}_{K}$ starting at $(x,y)$ with deterministic distance function $\rho(t)=d(X(t),Y(t))$ if and only if $\rho$ is continuous on $[0,\infty)$ and satisfies for almost every $t\ge0$ the differential inequality
\begin{equation}\label{eg:1}
-(n-1)\sqrt{K}\tan\left(\tfrac{\sqrt{K}\rho(t)}{2}\right)\le \rho'(t)\le -(n-1)\sqrt{K}\tan\left(\tfrac{\sqrt{K}\rho(t)}{2}\right)+\tfrac{2(n-1)\sqrt{K}}{\sin(\sqrt{K}\rho(t))}.
\end{equation}

In particular, for dimension $n=1$, the only co-adapted coupling $(X(t),Y(t))$ of Brownian motions on $\field{M}^1_K$ with deterministic distance is given by
\[
Y(t)= \begin{cases}
e^{i\theta}X(t) & \text{if }K>0 \\
X(t)+\theta, & \text{if }K=0 \\
\theta X(t), & \text{if }K<0 \\
\end{cases}
\]
for some $\theta\in\R$, with $\theta>0$ if $K<0$.
\end{theorem}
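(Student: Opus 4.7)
The plan is to derive Theorem~\ref{t:all} as a unified repackaging of the three case-specific results proved earlier in the paper: Theorem~\ref{t:01} (Euclidean, $K=0$), Theorem~\ref{t:10} (spherical, $K>0$), and its hyperbolic counterpart ($K<0$). The rescaling argument runs as follows. For $K>0$, the homothety $\phi(x)=x/\sqrt{K}$ identifies $\field{S}^n$ with $\field{M}^n_K$ while scaling distances by $1/\sqrt{K}$; combined with the time change $t\mapsto Kt$, it transports co-adapted Brownian couplings on $\field{S}^n$ to co-adapted Brownian couplings on $\field{M}^n_K$ bijectively. Writing $\tilde{\rho}(s)=\sqrt{K}\,\rho(s/K)$ and feeding this into Theorem~\ref{t:10}, a direct substitution converts the spherical inequality into~\eqref{eg:1}. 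The case $K<0$ is handled identically via the analogous rescaling $\field{H}^n\to \field{M}^n_K$, with the standard convention that in~\eqref{eg:1} the symbol $\sqrt{K}$ is read as $i\sqrt{-K}$, so that $\tan(\sqrt{K}\rho/2)$ becomes $i\tanh(\sqrt{-K}\rho/2)$ and $\sin(\sqrt{K}\rho)$ becomes $i\sinh(\sqrt{-K}\rho)$; this turns~\eqref{eg:1} into the hyperbolic inequality of the introduction. For $K=0$ we interpret both sides of~\eqref{eg:1} as limits $K\to 0$, which reduces the inequality to the Euclidean bound of Theorem~\ref{t:01}.

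For the rigidity statement when $n=1$, the inequality~\eqref{eg:1} collapses to $0\le \rho'(t)\le 0$, so $\rho$ is identically $d(x,y)$. In a one-dimensional ambient manifold I would write the co-adapted coupling in the form $dY=a(t)\,dW+\sqrt{1-a(t)^2}\,dW'$, where $W$ is the Brownian motion driving $X$ and $W'$ is an independent Brownian motion; imposing that the distance remain constant forces the martingale part of the distance process (computed in the local coordinate of each of the three geometries) to vanish identically. A short It\^o calculation then yields $a\equiv 1$, so that $Y$ is the image of $X$ under an orientation-preserving isometry of $\field{M}^1_K$ with fixed displacement. Enumerating the one-parameter families of such isometries in each geometry gives $Y=X+\theta$ for $\R$, $Y=e^{i\theta}X$ for $\field{S}^1$ (identified with the unit circle in $\C$), and $Y=\theta X$ with $\theta>0$ for $\field{M}^1_K$ with $K<0$, realized as $(0,\infty)$ with the metric $dx^2/(|K|x^2)$.

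The main obstacle, granted the three individual results, is to verify carefully that the rescaling bijection truly preserves the co-adapted coupling property: one needs to check that the time-changed filtration remains admissible and that both rescaled processes still satisfy the correct martingale problem on $\field{M}^n_K$. This amounts to a routine filtration-bookkeeping exercise, which I would carry out in detail for the spherical case $K>0$ and observe that the hyperbolic case follows by the identical argument. Once this is in place, \eqref{eg:1} is equivalent, case by case, to the inequalities of Theorems~\ref{t:01} and~\ref{t:10} and the hyperbolic theorem, and the existence (respectively uniqueness) assertions of Theorem~\ref{t:all} follow from the corresponding assertions of these three results.
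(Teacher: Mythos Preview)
Your proposal is correct and follows essentially the same route as the paper: the paper's proof of Theorem~\ref{t:all} consists of the single sentence ``Via a simple scaling argument, the proof of the theorem follows from the three cases discussed above,'' deferring entirely to Theorems~\ref{t:01}, \ref{t:10}, and~\ref{t:20}. Your write-up is in fact more explicit than the paper about the scaling bijection and the $n=1$ rigidity, though note that the paper obtains the one-dimensional conclusions inside the proofs of the individual theorems (by showing directly that $J(t)$ must be the identity, a rotation, etc.) rather than via a separate unified It\^o argument as you sketch.
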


Via a simple scaling argument, the proof of the theorem follows from the three cases discussed above: the case of the Euclidean space (Theorem \ref{t:01}), the case of the unit sphere $\field{S}^n$ (Theorem \ref{t:10}), and the case of the hyperbolic space $\field{H}^n$ (Theorem \ref{t:20}).

The outline of the paper is the following.  In Section~\ref{s:p} we introduce the basic notations and results needed in the sequel. For further use in the analy\-sis of co-adapted couplings of spherical Brownian motions, in Lemma \ref{p:1} we derive a characterization of all such couplings, similar to the one obtained in \cite{Emery} or \cite{Kendall} in the case of Euclidean Brownian motions, and intimately related to Stroock's representation of spherical Brownian motion.  Section~\ref{s:1} contains the analysis of Brownian couplings on $\R^{n}$ (Theorem~\ref{t:01}), and in Section~\ref{s:2} we present the analogous result for spherical Brownian motions (Theorem~\ref{t:10}).  In Section~\ref{s:3} we analyze the hyperbolic space case.   The paper concludes with Section~\ref{s:7}, which contains several remarks and corollaries of the main theorems, regarding the existence of fixed-distance, distance-increasing and distance-decreasing couplings, and an interpretation of the main results in terms of the stochastic Lion and Man problem.

We point out that from the geometric point of view the construction of the couplings is an extrinsic one, relying on Stroock's representation of spheri\-cal Brownian motion in terms of a Brownian motion in the ambient Euclidean space and the realization of the hyperbolic space as the half space in Euclidean space.  The advantage of this approach is that we can describe explicitly all co-adapted couplings, the downsize being that the construction applies only to model spaces. In another paper (\cite{pop}) we investigate, using an intrinsic approach, partial extensions of the results obtained in the present paper to the case of smooth manifolds without boundary.

\section{Preliminaries}\label{s:p}

We identify the vectors in $\mathbb{R}^{n}$ with the corresponding $n\times 1$ column matrices, and for a vector $x\in \mathbb{R}^{d}$ we denote by $x'$ the transpose of $x$. The dot product of two vectors $x,y\in \mathbb{R}^{n}$ will be written in terms of matrix multiplication as $x\cdot y=x'y$, and we will denote by $ | x | =\sqrt{x'x}$  the Euclidean length of the vector $x\in \mathbb{R}^{n}$.  We will use $\{e_{i}\}_{i=1,\dots,n}$ to denote the standard basis of $\R^{n}$.

We denote the \emph{$n$-dimensional unit sphere} in $\mathbb{R}^{n+1}$ by
\[
\field{S}^{n}=\left\{ x\in \mathbb{R}^{n+1}:| x| =1\right\}
\]
and by $d(x,y)$ the geodesic distance on it.  The relationship between the geodesic distance on $\field{S}^n$ and the Euclidean distance is given by
\begin{equation}\label{e1:ds}
d\left( x,y\right) =\arcsin \sqrt{1- \left( x'y\right)^2}=2\arcsin\left(\frac{| x-y|}{2}\right).
\end{equation}

There are various ways of describing the spherical Brownian motion on $\field{S}^{n} $, that is the Brownian motion living on the surface of the sphere $\field{S}^{n}$ (see for example \cite{Brillinger}). We will use Stroock's representation of spherical Brownian motion (\cite{Stroock}), as the solution $X(t)$ of the It\^{o}'s stochastic differential equation
\begin{equation}\label{e:sbm}
X(t)=X(0)+\int_0^t \left( I-X(s)X(s)'\right) dB(s)-\frac{n}{2}\int_0^t X(s)ds,  \qquad t\ge 0,
\end{equation}
where $B(t)$ is a $(n+1)$-dimensional Brownian motion.   The last term above may be thought as the pull needed in order to keep $X(t)$ on the surface of the sphere $\field{S}^{n}$.  In terms of Stratonovich integrals (see for instance \cite[Chapter 2]{Elton}), the above can be written equivalently as
\begin{equation}\label{e:sf:s}
X(t)=X(0)+\int_0^t \left(I-X(s)X(s)'\right)\circ dB(s),  \qquad t\ge 0,
\end{equation}
where ``$\circ$" denotes the Stratonovich integration.  Note that the operator $I-X(t)X(t)'$ is simply the projection from $\R^{n+1}$ onto the tangent space to the sphere at $X(t)$.  Equation \eqref{e:sbm} represents extrinsic formulation of the spherical Brownian motion on $\field{S}^{n}$, seen as a submanifold of $\R^{n+1}$.


The Laplacian $\Delta_{S}$ on the sphere can be computed as
\[
(\Delta_{S} f)(x)=(\Delta_{E} \hat{f})(x),\qquad x\in \field{S}^{n}
\]
where $\hat{f}(x)=f\left({x}/{|x|}\right)$, and $\Delta_{E}$ is the Euclidean Laplacian.   Using \eqref{e:sbm} it is not hard to see that $\hat{f}(X(t))-\frac{1}{2}\int_{0}^{t}\Delta_{E}\hat{f}(X(s))$ is a martingale for any smooth function $f$ on $\field{S}^{n}$, which proves that $X(t)$ is indeed the spherical Brownian motion.

We denote by $\field{S}_r^n=\{x\in\field{R}^{n+1}: |x|=r\}$ the $n$-dimensional sphere of radius $r>0$.  It is well known that the curvature of $\field{S}^{n}_{r}$ is constant equal to $1/r^{2}$, and for this reason $\field{S}^{n}_{r}$ is called the \emph{model space of constant curvature} $1/r^{2}$.   The Brownian motion $X^r(t)$ on $\field{S}^{n}_{r}$ is given by a simple rescaling of the Brownian motion $X(t)$ on the unit sphere $\field{S}^n$, and it can be described as $X^{r}(t)=rX(t/r^{2})$, with $X(t)$ solving \eqref{e:sbm}.

There are several models for the manifold of constant curvature $-1$, from which we choose here the half-space model.  Our model for the \emph{$n$-dimensional hyperbolic space} is thus given by
\[
\field{H}^{n}=\{(x_{1},\dots, x_{n})\in\R^{n}:x_{1}>0\}.
\]

For a point $x=(x_{1},x_{2},\ldots,x_{n})\in\field{H}^{n}$ we will often denote $\tilde{x}=(0,x_{2},\ldots,x_{n})$. The metric on $\field{H}^n$ is given by
\begin{equation}\label{e:hm}
ds^{2}=\frac{dx_{1}^{2}+dx_{2}^{2}+\dots +dx_{n}^{2}}{x_{1}^{2}},
\end{equation}
and the corresponding distance  is given by
\begin{equation}\label{e1:dh}
d(x,y)=\mathrm{arccosh}\left(\frac{| \tilde{x}-\tilde{y} |^{2}+x_{1}^{2}+y_{1}^{2}}{2x_{1}y_{1}}\right)=\mathrm{arccosh}\left(1+\frac{| x-y |^{2}}{2x_{1}y_{1}}\right),
\end{equation}
where $\mathrm{arccosh}$ denotes the inverse of the hyperbolic cosine function.

The Laplace-Beltrami operator on $\field{H}^n$ (see for instance \cite{Davies}) for details) is defined as 
\begin{equation}\label{e1:lh}
\Delta_{H}=x_{1}^{2}\sum_{i=2}^{n}\frac{\partial^{2}}{\partial x_{i}^{2}}+x_{1}^{2}\frac{\partial^{2}}{\partial x_{1}^{2}}-(n-2)x_{1}\frac{\partial}{\partial x_{1}}.
\end{equation}
The Brownian motion on $\field{H}^{n}$ is the diffusion process on $\field{H}^n$ with generator given by $\Delta/2$ above.  A different way of constructing the process (see for example \cite{Matsumoto}) is as the solution of the stochastic differential equation
\begin{equation}\label{e:hbm}
dX(t)=X_{1}(t)dB(t)-\frac{n-2}{2} X_{1}(t) e_{1}dt,
\end{equation}
where $B(t)$ is a $n$-dimensional Euclidean Brownian motion.   It is straightforward to check that the process $X(t)$ defined in this way is indeed the Brownian motion on $\field{H}^{n}$ by simply using the It\^o's formula and a straightforward calculation.

The space $\field{H}^{n}$ defined above is the space of constant curvature $-1$.  In a similar way we can define the \emph{model space of constant curvature} $-1/r^{2}$ ($r>0$) by $\field{H}^{n}_{r}=\{ x\in\R^{n}:x_{1}>0 \}$, with metric given by $r^{2}ds^{2}$, where $ds^{2}$ is given by \eqref{e:hm}.  The Brownian motion on this space is  defined by $X^{r}(t)=rX(t/r^{2})$, where $X(t)$ is the solution of \eqref{e:hbm}.

In order to give a unified statement of our results, we define the model space of  dimension $n\ge1$ and  constant curvature $K\in\R$ by
\[
\field{M}^{n}_{K}=\begin{cases}
\field{S}^{n}_{1/\sqrt{K}}, & \text{if } K>0 \\
\R^{n}, & \text{if } K=0 \\
\field{H}^{n}_{-1/\sqrt{-K}}, & \text{if } K<0
\end{cases}
.
\]

Recall that in general by a \emph{coupling} we understand a pair of processes $%
\left( X(t),Y(t)\right) $ defined on the same probability space, which are separately Markov, that is%
\[
\begin{split}
P\left( X(s+t)\in A| X(s)=z,X(u):0\leq u\leq s\right)
&=P^{z}\left( X(t)\in A\right) \\
P\left(  Y(s+t)\in A| Y(s)=z,Y_{u}:0\leq u\leq s\right)
&=P^{z}\left( Y(t)\in A\right)
\end{split}
\]
for any $s,t\ge 0$ and any measurable set $A$ in the state space of the processes.

The notion of \emph{Markovian coupling} as used in \cite{Burdzy-Benjamini} requires that in addition to the above, the joint process $(X(t),Y(t))$ is Markov and
\begin{equation}\label{eq:adapt}
\begin{split}
P\left(  X(s+t)\in A| X(s)=z,X(u),Y_{u}:0\leq u\leq
s\right) &=P^{z}\left( X(t)\in A\right) \\
P\left(  Y(s+t)\in A| Y(s)=z,X(u),Y_{u}:0\leq u\leq
s\right) &=P^{z}\left( Y(t)\in A\right)
\end{split}
\end{equation}
for any $s,t\ge0$ and any measurable set $A$ in the state space of the processes. Markovian couplings are easily obtained for instance in the case when the process $(X(t),Y(t))$ is actually a diffusion.   This would be the ideal case, but one can still construct a Markovian coupling by patching together diffusion processes for a conveniently chosen set of stopping times.

The notion of \emph{co-adapted coupling} (introduced by Kendall, \cite{Kendall}) is the same as the above but without the Markov property of $(X(t),Y(t))$. By a result on co-adapted couplings (Lemma 6 in \cite{Kendall}), a co-adapted coupling $\left( X(t),Y(t)\right) $ of Brownian motions in $\R^{n}$ can be represented as%
\begin{equation}\label{e:coc}
Y(t)=Y(0)+\int_0^t J(s)dX(s)+\int_0^t K(s)dC(s), \qquad t\ge0,
\end{equation}
where $C$ is a $n$-dimensional Brownian motion independent of $X$ (po\-ssi\-bly on a larger filtration), and $J ,K\in \field{M}_{n\times n}$ are matrix-valued predictable random processes, satisfy\-ing the identity
\begin{equation}\label{e:JK}
J(t)J(t)'+K(t)K(t)' =I,  \qquad t \ge 0,
\end{equation}%
with $I$ denoting the $n\times n$ identity matrix.

The formulae \eqref{e:coc} and \eqref{e:JK} provide an explicit representation of co-adapted couplings of $n$-dimensional Brownian motions, which will be used in order to characterize the couplings of deterministic distance in Euclidean spaces.  In order to derive the equivalent characterization for couplings of spherical Brownian motions, we need an explicit representation of co-adapted couplings on $\field{S}^{n}$. The following result gives such a characterization, intimately related to Stroock's representation \eqref{e:sbm} of the spherical Brownian motion.

\begin{lemma}\label{p:1}
Let $(X(t),Y(t))$ be a coupling of Brownian motions on $\field{S}^{n}$ ($n\ge1$).  The coupling $(X(t),Y(t))$ is co-adapted if and only if there exists a co-adapted coupling $(B(t),W(t))$ of $(n+1)$-dimensional Brownian motion, such that
\begin{equation}\label{e:c:s}
\left \{\begin{split}
X(t)&=X(0)+\int_0^t (I-X(s)X(s)')dB(s)-\frac{n}{2}\int_0^tX(s)ds \\
Y(t)&=Y(0)+\int_0^t (I-Y(s)Y(s)')dW(s)-\frac{n}{2}\int_0^t Y(s)ds
\end{split}, \qquad t\ge 0.
\right.
\end{equation}
\end{lemma}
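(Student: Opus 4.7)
The plan is to prove both directions via Stroock's SDE \eqref{e:sbm}. For the $(\Leftarrow)$ direction, assume $(B, W)$ is a co-adapted pair of $\mathcal{G}_t$-Brownian motions on $\R^{n+1}$. The SDEs in \eqref{e:c:s} have unique strong solutions $X, Y$ adapted to $\mathcal{G}_t$, and by \eqref{e:sbm} each marginal is individually a spherical Brownian motion on $\mathcal{G}_t$. Since $\mathcal{G}_t \supseteq \sigma(X(u), Y(u) : u \le t)$, the co-adaptedness relations \eqref{eq:adapt} follow from the Markov property of $X$ (resp.\ $Y$) in the ambient filtration.

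For the $(\Rightarrow)$ direction, let $\mathcal{F}_t$ be the joint filtration of the given pair $(X, Y)$. By the co-adaptedness hypothesis, $X$ and $Y$ are each spherical Brownian motions with respect to $\mathcal{F}_t$. Define the tangential martingale parts
\[
M^X(t) := X(t) - X(0) + \tfrac{n}{2}\int_0^t X(s)\,ds, \qquad M^Y(t) := Y(t) - Y(0) + \tfrac{n}{2}\int_0^t Y(s)\,ds.
\]
Applying the $\Delta_S/2$-martingale problem to the coordinate functions $x_i$ (for which $\Delta_S x_i = -n x_i$) and to the quadratic functions $x_i x_j$ shows that $M^X$ is an $\R^{n+1}$-valued $\mathcal{F}_t$-martingale with matrix-valued covariation $d[M^X, (M^X)'] = (I - XX')\,dt$, and analogously for $M^Y$. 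Now enlarge the probability space to carry two independent one-dimensional standard Brownian motions $\beta^X, \beta^Y$, taken independent of $\mathcal{F}_\infty$, and set $\mathcal{G}_t := \mathcal{F}_t \vee \sigma(\beta^X(s), \beta^Y(s) : s \le t)$. Define
\[
B(t) := M^X(t) + \int_0^t X(s)\,d\beta^X(s), \qquad W(t) := M^Y(t) + \int_0^t Y(s)\,d\beta^Y(s).
\]
A short covariation calculation using $|X|^2 \equiv 1$ gives $d[B, B'] = (I - XX')\,dt + XX'\,dt = I\,dt$, and similarly for $W$; Lévy's characterisation then identifies $B$ and $W$ as co-adapted $\mathcal{G}_t$-Brownian motions on $\R^{n+1}$. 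Finally, the identities $(I - XX')X = 0$ (killing the added normal noise) and $X \cdot dM^X = 0$ (which follows from $d|X|^2 = 0$ via Itô, and gives $(I - XX')\,dM^X = dM^X$) yield $\int_0^t (I - XX')\,dB = M^X(t)$, which rearranges into the first equation of \eqref{e:c:s}; the $Y$-equation is identical.

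The main technical point — and the only place where the co-adaptedness hypothesis is actually used — is the identification of $M^X$ as an $\mathcal{F}_t$-martingale with covariation $(I - XX')\,dt$. This requires $X$ to be a spherical Brownian motion with respect to the full joint filtration $\mathcal{F}_t$, not merely with respect to its own natural filtration, and this is exactly what \eqref{eq:adapt} provides. Once this has been established, the rest of the construction is a standard enlargement-of-filtration argument completing the degenerate tangential martingale $M^X$ into a genuine $(n+1)$-dimensional Brownian motion by adding an independent normal component, combined with Lévy's theorem.
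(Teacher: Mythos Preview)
Your proof is correct and takes a genuinely different route from the paper's.  For the direct implication the paper invokes the intrinsic machinery of orthonormal frame bundles: it constructs the $n$-dimensional anti-development Brownian motions $\tilde{B},\tilde{W}$ of $X,Y$ via parallel transport, then appends independent one-dimensional noises $\bar{B},\bar{W}$ in the normal directions $\R X(t),\R Y(t)$, and finally transports the pair back into $\R^{n+1}$ via the (extended) parallel transport maps $\alpha_t,\beta_t$.  By contrast, you stay entirely in the ambient space $\R^{n+1}$: you read off from the $\tfrac12\Delta_S$-martingale problem (applied to the linear and quadratic coordinate functions) that $M^X$ is an $\mathcal{F}_t$-martingale with covariation $(I-XX')\,dt$, then complete it to a full $(n+1)$-dimensional Brownian motion by adding the rank-one normal piece $X\,d\beta^X$ and invoking L\'evy.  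Your argument is more elementary---it avoids the frame bundle altogether and uses only It\^o calculus plus L\'evy's characterisation---and is well suited to the extrinsic viewpoint the paper adopts elsewhere.  The paper's approach, on the other hand, is the natural one if one wants a template that generalises to Brownian motions on arbitrary manifolds, where no ambient linear structure is available and the anti-development is the canonical way to produce a driving Euclidean Brownian motion.
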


\begin{proof}

Although the above is a statement about Brownian motions which is defined extrinsically in the language of differential geometry, in the proof we are going to use the intrinsic construction of the Brownian motion in terms of orthonormal frame bundle, as it is discussed for example in \cite{Elton} and \cite{Stroock2}.

The converse implication is easier to prove, and we begin with it. Let $\mathcal{F}=\left(\mathcal{F}_{t}\right)_{t\ge0}$ be the filtration generated by $B$ and $W$.  The hypothesis that the coupling $(B(t),W(t))$ is co-adapted shows that both $B(t)$ and $W(t)$ are Brownian motions with respect to the filtration $\mathcal{F}$.  Being solutions of \eqref{e:c:s}, $X(t)$ and $Y(t)$ are also adapted with respect to the filtration $\mathcal{F}$.  The conditions in \eqref{eq:adapt} follows now easily from the fact that both $X(t)$ and $Y(t)$ are Markov processes with respect to the filtration $\mathcal{F}$, hence they are also Markov processes with respect to the smaller filtration generated by $(X,Y)$.

To prove the direct implication, assume that $(X(t),Y(t))$ is a co-adapted coupling of Brownian motions on $\field{S}^n$ and let $\mathcal{F}=\left(\mathcal{F}_t\right)_{t\ge0}$ be the filtration generated by $(X,Y)$.  Condition \eqref{eq:adapt} shows that both $X$ and $Y$ are Markov processes with respect to the filtration $\mathcal{F}$, and since they are also diffusions on $\field{S}^{n}$, with one half the Laplacian on $\field{S}^{n}$ as generator, it follows that $X$ and $Y$ are semimartingales with respect to the filtration $\mathcal{F}$. To see this, considering a smooth and compactly supported function $f:\field{S}^{n}\rightarrow \R$ and denoting by $P_{t}$ the semigroup generated by $\frac{1}{2}\Delta_S$  on $\field{S}^{n}$, we have
\begin{align*}
&\E\left(\left.f(X(t))-\frac{1}{2}\int_{0}^{t}\Delta_S f(X(u))du\right|\mathcal{F}_{s}\right)=(P_{t-s}f)(X(s))-\frac{1}{2}\int_{0}^{t}\E\left(\Delta_S f(X(u))|\mathcal{F}_{s}\right)du\\
&=(P_{t-s}f)(X(s))-\frac{1}{2}\int_{0}^{s}\Delta_S f(X(u))du-\frac{1}{2}\int_{s}^{t} P_{u-s}\Delta_S f(X(u))du \\
&=(P_{t-s}f)(X(s))-\frac{1}{2}\int_{0}^{s}\Delta_S f(X(u))du-\int_{s}^{t} \frac{d}{du} P_{u-s} f(X(u))du \\
&=(P_{t-s}f)(X(s))-\frac{1}{2}\int_{0}^{s}\Delta_S f(X(u))du-P_{t-s} f(X(s))+f(X(s))\\
&=f(X(s))-\frac{1}{2}\int_{0}^{s} \Delta_S f(X(u))du
\end{align*}
for all $t\ge s\ge 0$, which shows that $f(X(t))-\frac{1}{2}\int_{0}^{t}\Delta_S f(X(u))du$ is a martingale with respect to the filtration $\mathcal{F}$.  In turn this implies that $X$ is a semimartingale with respect to the filtration $\mathcal{F}$, with a similar proof for $Y$.

The second step in proving the existence of the Brownian motions $B(t)$ and $W(t)$ satisfying \eqref{e:c:s} is to use orthonormal frame bundles in order  to construct their corresponding anti-development motions $\tilde{B}(t)$ and $\tilde{W}(t)$. The detailed construction  is clearly presented in \cite[Section 2.3]{Elton}, and we are not going to insist on it here.  Note that by construction, $\tilde{B}$ and $\tilde{W}$ are $\mathcal{F}_{t}$-adapted Brownian motions.

Given the starting points $x=X(0)$ and $y=Y(0)$ of the coupling, we can simply identify the tangent spaces $T_{x}\field{S}^{n}$ and $T_{y}\field{S}^{n}$ at these points with $\R^{n}$, and think of the Brownian motions $\tilde{B}$ and $\tilde{W}$ as moving in these spaces.  To complete the construction, we consider $1$-dimensional Brownian motions $\bar{B}(t)$ and $\bar{W}(t)$, independent of each other and also independent of $\mathcal{F}_{t}$, which take values on the line determined by $X(t)$ and $Y(t)$.   Let $\alpha_{t}$ be the parallel transport along the Brownian motion $X$ from $x$ to $X(t)$, and $\beta_{t}$ be the parallel transport from $y$ to $Y(t)$ along the Brownian motion $Y$.  It is easy to extend the operators $\alpha_{t}$ and $\beta_{t}$ to orthogonal matrices on $\R^{n+1}$, by simply defining them to send the vector $x$ into $X(t)$, respectively $y$ into $Y(t)$ (note that both $X(t)$ and $Y(t)$ are in the space $\field{S}^{n}$).

Consider the decompositions $\R^{n+1}=T_{x}\field{S}^{n}\times \R x$, $\R^{n+1}=T_{y}\field{S}^{n}\times \R y$ (with $\R x=\{ax: a\in\R\}$), and define
\[
dB(t)=\alpha_{t}d(\tilde{B}(t),\bar{B}(t))\quad\text{ and }\quad dW(t)=\beta_{t}d(\tilde{W}_{t},\bar{W}_{t}).
\]

Setting $\mathcal{G}_{t}$ to be be the $\sigma$-algebra generated by $\mathcal{F}_{t}$ and $\{\bar{B}_{s}, \bar{W_s}:0\le s\le t\}$, it is a simple matter to check that $B(t)$ and $W(t)$ are $\mathcal{G}_{t}$-adapted $(n+1)$-dimensional Brownian motions.

By construction (see \cite[Theorem 2.3.4 and Lemma 2.3.2]{Elton}) we have
\[
dX(t)=\left(I-X(t)X(t)'\right)\circ (\alpha_{t}d\tilde{B}(t))
\]
and since $\left(I-X(t)X(t)'\right)(\alpha_{t}d\tilde{B}(t))=\left(I-X(t)X(t)'\right)(\alpha_{t}dB(t))$, we obtain 
\[
dX(t)=\left(I-X(t)X(t)'\right)\circ dB(t),
\]
and similarly
\[
dY(t)=\left(I-Y(t)Y(t)'\right)\circ dW(t).
\]

The fact that $(B(t),W(t))$ form a co-adapted coupling follows now easily, concluding the proof. \qedhere

\end{proof}

We close this section with the analogous result given in the previous lemma, for the case of hyperbolic Brownian motion.

\begin{lemma}\label{p:2h}
Let $(X(t),Y(t))$ be a coupling of Brownian motions on the hyperbolic space $\field{H}^{n}$ ($n\ge1$).  The coupling $(X(t),Y(t))$ is co-adapted if and only if there exists a co-adapted coupling $(B(t),W(t))$ of $n$-dimensional Brownian motions, such that
\begin{equation}\label{e:c:sh}
\left \{\begin{split}
X(t)&=X(0)+\int_0^t X_1(s)dB(s)-\frac{n-2}{2}\int_0^t X_1(s) e_{1} ds \\
Y(t)&=Y(0)+\int_0^t Y_1(s) dW(s)-\frac{n-2}{2}\int_0^t Y_1(s) e_{1}ds
\end{split}
, \qquad t\ge 0.
\right.
\end{equation}

\end{lemma}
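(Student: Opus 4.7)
The plan is to adapt the argument of Lemma~\ref{p:1}, with the situation in fact simpler here because in the half-space model the driving Brownian motions of \eqref{e:hbm} live in $\R^{n}$, the same dimension as the ambient Euclidean space containing $\field{H}^{n}$. Consequently no extra degree of freedom needs to be absorbed via an orthonormal frame bundle and parallel-transport construction as in the spherical case; the driving $B$ can be read off directly from the semimartingale decomposition of $X$.

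For the converse direction I would let $\mathcal{F}=(\mathcal{F}_{t})_{t\ge 0}$ denote the filtration generated by $(B,W)$, observe that the co-adaptedness of $(B,W)$ makes both $B$ and $W$ into $\mathcal{F}$-Brownian motions, and note that the processes $X,Y$ defined by \eqref{e:c:sh} are then $\mathcal{F}$-adapted strong solutions. Applying It\^o's formula together with the explicit expression \eqref{e1:lh} for $\Delta_{H}$ confirms that $X$ and $Y$ are individually Brownian motions on $\field{H}^{n}$, and \eqref{eq:adapt} follows because the Markov property holds with respect to the larger filtration $\mathcal{F}$ and hence also with respect to the smaller filtration generated by $(X,Y)$.

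For the direct direction I would set $\mathcal{F}=(\mathcal{F}_{t})_{t\ge 0}$ to be the filtration generated by $(X,Y)$ and reproduce verbatim the semigroup computation from the proof of Lemma~\ref{p:1}, with $\Delta_{S}$ replaced by $\Delta_{H}$, to conclude that for every smooth compactly supported $f:\field{H}^{n}\to\R$ the process $f(X(t))-\tfrac{1}{2}\int_{0}^{t}\Delta_{H}f(X(s))\,ds$ is an $\mathcal{F}$-martingale, and likewise for $Y$. This is the step where co-adaptedness is essential, and it is the only conceptual obstacle; the rest is routine. Applying this coordinatewise to suitable localizations of $f(x)=x_{i}$ together with \eqref{e1:lh} yields the semimartingale decomposition
\[
X(t)=X(0)+M_{X}(t)-\tfrac{n-2}{2}\int_{0}^{t}X_{1}(s)e_{1}\,ds,
\]
with $M_{X}$ a continuous $\R^{n}$-valued $\mathcal{F}$-martingale, and an analogous decomposition for $Y$ with martingale $M_{Y}$. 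Repeating the argument with the coordinate products $x_{i}x_{j}$ (equivalently, computing the carr\'e du champ of $\tfrac{1}{2}\Delta_{H}$) identifies the covariation as $d\langle M_{X}^{i},M_{X}^{j}\rangle_{t}=X_{1}(t)^{2}\delta_{ij}\,dt$, and analogously for $M_{Y}$.

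Since $X_{1}$ and $Y_{1}$ are strictly positive on $\field{H}^{n}$, I would finally define
\[
B(t):=\int_{0}^{t}\frac{1}{X_{1}(s)}\,dM_{X}(s),\qquad W(t):=\int_{0}^{t}\frac{1}{Y_{1}(s)}\,dM_{Y}(s),
\]
each a continuous $\mathcal{F}$-martingale with covariation $\delta_{ij}\,dt$. L\'evy's characterization then shows that $B$ and $W$ are $n$-dimensional $\mathcal{F}$-Brownian motions, so $(B,W)$ is automatically a co-adapted coupling on $\mathcal{F}$; substituting back into the definitions recovers \eqref{e:c:sh}. In contrast with the spherical case there is no dimension mismatch to absorb, and accordingly no auxiliary independent Brownian motions need to be introduced.
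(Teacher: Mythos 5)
Your proposal is correct and follows essentially the same route as the paper's proof: the easy direction via inclusion of filtrations, and the substantive direction by showing $X$ and $Y$ are $\mathcal{F}$-semimartingales (via the semigroup/martingale-problem computation borrowed from Lemma~\ref{p:1}), then recovering $B$ and $W$ by integrating $1/X_{1}$ and $1/Y_{1}$ against the martingale parts and invoking L\'evy's characterization. Your explicit remark that no auxiliary independent Brownian motions are needed, since the driving noise already has the ambient dimension $n$, is exactly the simplification the paper exploits implicitly.
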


\begin{proof}  The direct proof is rather simple.  Let $\mathcal{F}_{t}$ be the sigma algebra generated by the Brownian motions $B(s)$ and $W(s)$ for $s\in[0,t]$.  Since the processes $X(t)$ and $Y(t)$ are defined by the stochastic differential equations \eqref{e:c:sh}, we see that the sigma algebra generated by $X_{s}$ and $Y_{s}$, $s\in[0,t]$ is included in $\mathcal{F}_{t}$.  Thus $X(t)$ and $Y(t)$ are Markov processes with respect to the filtration $\mathcal{F}_{t}$, which proves the result.

For the converse implication, consider the filtration $\mathcal{F}=\left(\mathcal{F}_{t}\right)_{t\ge0}$ generated by $(X,Y)$, and note that both $X$ and $Y$ are Markov processes with respect to the filtration $\mathcal{F}$ (by the co-adapted hypothesis), and therefore they are semimartingales with respect to $\mathcal{F}$ (the proof is similar to that in the previous lemma).

Defining
\[
B(t)=\int_{0}^{t}\frac{1}{X_{1}(t)}dX(t)+\frac{(n-2)t}{2}e_{1}\text{ and } W(t)=\int_{0}^{t}\frac{1}{Y_{1}(t)}dY(t)+\frac{(n-2)t}{2}e_{1},
\]
it is not difficult to check that $B$ and $W$ are Brownian motions with respect to the filtration $\mathcal{F}$.  For instance, we can use  Levy's criterion and show that $B(t)$ is an $\mathcal{F}_{t}$-martingale with quadratic variation given by $t I$, where $I$ is the $n\times n$ identity matrix.  The martingale characterization \eqref{e:hbm} of the hyperbolic Brownian motion $X$ shows that the $X_{1}(t)-\frac{n-2}{2}\int_{0}^{t}X_{1}(s)ds$ and $X_2(t),\ldots,X_n(t)$ are $\mathcal{F}_t$-martingales, from which we deduce that $B(t)$ is also a $\mathcal{F}_t$-martingale. Using again \eqref{e:hbm} and the definition of $B$ above, we conclude that $$\langle B_i,B_j\rangle_t=\int_0^t \frac{1}{X_1^2(s)}d\langle X_i,X_j\rangle _s=\int_0^t \frac{1}{X_1^2(s)}\delta_{ij} X_1^2(s)ds=\delta_{ij}t, \quad 1\le i,j\le n.$$

A similar proof applies to $W$, and the claim follows. 
\end{proof}

\section{Brownian couplings of deterministic distance in $\R^{n}$}\label{s:1}

The main result of this section is the following characterization of all co-adapted couplings $\left( X(t),Y(t)\right) $ of $n$-dimensional Brownian motions, for which the distance $| X(t)-Y(t)|$ is a deterministic function of $t\geq 0 $.


\begin{theorem}\label{t:01}
For any distinct points $x,y\in\R^{n}$ ($n\ge 1$) and an arbitrary non-negative function $\rho:[0,\infty)\to[0,\infty)$ with $\rho(0)=|x-y|$, there exists a co-adapted coupling of Brownian motions  $(X(t),Y(t))$ in $\R^{n}$ starting at $(x,y)$ with deterministic distance function $|X(t)-Y(t)| =\rho(t)$ if and only if $\rho$ is continuous on $[0,\infty)$ and satisfies almost everywhere the differential inequality
\begin{equation}\label{iff condition for rho}
0\le \rho'(t)\le \frac{2(n-1)}{\rho(t)},\qquad t\ge0.
\end{equation}

In particular, the only co-adapted coupling of Brownian motions in $\R^{n}$ with (deterministic) non-increasing distance function is the translation coupling, and the only co-adapted coupling of Brownian motions in $\R$ with deterministic distance function is the translation coupling.

\end{theorem}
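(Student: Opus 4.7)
The approach is to exploit Kendall's representation \eqref{e:coc}--\eqref{e:JK} of co-adapted couplings. Setting $Z(t) = Y(t) - X(t)$, so that $dZ = (J-I)\,dX + K\,dC$, It\^o's formula together with $JJ' + KK' = I$ yields
\[
d\rho^{2}(t) = 2Z(t)'(J(t)-I)\,dX(t) + 2Z(t)'K(t)\,dC(t) + \bigl(2n - 2\tr J(t)\bigr)\,dt.
\]
Every claim of the theorem will be extracted from this single identity.

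\emph{Necessity.} If $\rho$ is deterministic, then the continuous martingale part of $\rho^{2}$ vanishes; its quadratic variation equals $4\int_{0}^{t}\bigl(|Z'(J-I)|^{2}+|Z'K|^{2}\bigr)\,ds$, and the independence of $X$ and $C$ therefore forces $(J(t)-I)'Z(t) = 0$ and $K(t)'Z(t) = 0$ for a.e.\ $(t,\omega)$. Using $JJ' + KK' = I$, the relation $K'Z = 0$ gives $JJ'Z = Z$, and combined with $J'Z = Z$ this implies $JZ = JJ'Z = Z$. Hence in an orthonormal frame adapted to $\mathrm{span}(Z)\oplus Z^{\perp}$ the matrix $J$ is block-diagonal of the form $\mathrm{diag}(1, J_{2})$ with $J_{2}\in\field{M}_{(n-1)\times(n-1)}$ satisfying $J_{2}J_{2}'\preceq I$, so that $\tr J_{2}\in[-(n-1), n-1]$. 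The drift equation then reads $\rho(t)\rho'(t) = (n-1) - \tr J_{2}(t)$, which is precisely \eqref{iff condition for rho}, and continuity of $\rho$ follows from absolute continuity of $\rho^{2}$. When $n=1$ the block $J_{2}$ is empty, forcing $J\equiv 1$ and $K\equiv 0$; hence $Z$ is constant, the coupling reduces to the translation coupling, and the last sentence of the theorem is established.

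\emph{Sufficiency and construction.} Given $\rho$ as in the statement, the bound $\rho'\ge 0$ yields $\rho(t)\ge\rho(0)>0$ for all $t$. For $n\ge 2$, set $c(t) = 1 - \rho(t)\rho'(t)/(n-1)\in[-1,1]$, take $C$ an independent $n$-dimensional Brownian motion, and write $U = Z/|Z|$. I will solve the SDE
\[
dZ(t) = (c(t)-1)\bigl(I - U(t)U(t)'\bigr)dX(t) + \sqrt{1-c(t)^{2}}\bigl(I - U(t)U(t)'\bigr)dC(t),\qquad Z(0)=y-x,
\]
which corresponds to the choices $J = UU' + c(I-UU')$ and $K = \sqrt{1-c^{2}}(I-UU')$ and verifies $JJ'+KK' = I$ by direct expansion. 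A one-line It\^o computation then shows $d|Z|^{2} = 2\rho(t)\rho'(t)\,dt$ with vanishing martingale part, so $|Z(t)|\equiv\rho(t)$, and $Y := X + Z$ is the required coupled Brownian motion. The main technical hurdle is establishing well-posedness of this nonlinear SDE, whose coefficients are only locally Lipschitz on $\{|Z|>0\}$; I will rely on a localisation argument exploiting the a priori lower bound $\rho(t)\ge\rho(0)>0$ to keep the projection $I - UU'$ smooth throughout. A secondary subtlety in the necessity argument is the rigorous passage from the vanishing of the scalar martingale $\int_{0}^{t}\bigl(2Z'(J-I)\,dX + 2Z'K\,dC\bigr)$ to the pointwise vector identities $(J-I)'Z = 0$ and $K'Z = 0$, which rests on the independence of $X$ and $C$ together with uniqueness of the semimartingale decomposition.
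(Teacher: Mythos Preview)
Your proposal is correct and follows essentially the same route as the paper: Kendall's representation, It\^o on $|Z|^{2}$, kill the martingale part to get $J'Z=Z$ and $K'Z=0$, read off the trace bound, and for sufficiency take $J=UU'+c(I-UU')$, $K=\sqrt{1-c^{2}}(I-UU')$ with $c=1-\rho\rho'/(n-1)$ --- these are exactly the matrices the paper builds. Your extra step deducing $JZ=Z$ (hence the block form $\mathrm{diag}(1,J_{2})$) is a clean way to isolate $\tr J_{2}$; the paper instead computes $n-\tr J=\sum_{i\ge2}\xi_{i}'(I-J)\xi_{i}$ directly using only $J'Z=Z$, which is marginally quicker but equivalent. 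You are also more explicit than the paper about the SDE for $Z$ and its well-posedness, which the paper leaves implicit.

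One omission: you claim ``the last sentence of the theorem is established'' after treating only $n=1$, but that sentence also asserts that for \emph{any} $n$ a non-increasing deterministic $\rho$ forces the translation coupling. This needs the extra line: $\rho'\equiv0$ gives $\tr J_{2}=n-1$, and since each $\xi_{i}'J_{2}\xi_{i}\le|J_{2}\xi_{i}|\le1$ with equality throughout, Cauchy--Schwarz forces $J_{2}\xi_{i}=\xi_{i}$ for every $i$, hence $J=I$, $K=0$.
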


\begin{proof}
If $(X,Y)$ is an arbitrary co-adapted coupling of $n$-dimensional Brownian motions, $Y$ can be represented as in (\ref{e:coc}), where $C$ is a $n$-dimensional Brownian motion independent of $X$, and the matrices $J,K$ satisfy (\ref{e:JK}).

Setting $Z(t)=X(t)-Y(t)$ and using It\^{o}'s formula we obtain
\[
\begin{split}
d| Z(t)| ^{2}=&2Z(t)'dZ(t)+\sum_{i=1}^{n}d\langle Z^{i}\rangle _{t} \\
=&2Z(t)'\left( I-J(t)\right) dX(t)-2Z(t)'K(t)dC(t)+\sum_{i=1}^{n}d\langle Z^{i}\rangle _{t}.
\end{split}
\]

Using the independence of $X$ and $C$, and the relation (\ref{e:JK}) we obtain
\[
\begin{split}
\sum_{i=1}^{n}d\langle Z^{i}\rangle _{t} &=\mathrm{tr}\left( \left(
I-J(t)\right) \left( I-J(t)\right)' +K(t)K(t)'\right) dt =2\left( n-\mathrm{tr}\left( J(t)\right) \right) dt.
\end{split}
\]

From the last two equations we arrive at
\[
d| Z(t)| ^{2}=2Z(t)'\left( I-J(t)\right)
dX(t)-2Z(t)'K(t)dC(t)+2\left( n-\mathrm{tr}\left( J(t)\right)
\right) dt.
\]%

To prove the first claim of the theorem, note that if $(X,Y)$ is a co-adapted coupling of Brownian  motions with deterministic distance function $\rho(t)=|Z(t)|$, the martingale part of $|Z(t)|^2$ must be identically zero, and therefore (using the independence of the Brownian motions $X$ and $C$) we must have
\begin{equation}\label{e:cace}
J(t)' Z(t)
=Z(t),\, K(t)' Z(t)=0, \text{ and }  \rho^2(t)=\rho^2(0)+\int_0^t 2 (n-\mathrm{tr}\left( J(s)\right) )ds
\end{equation}%
for all $t\geq 0$.

From \eqref{e:JK} it follows that
$$0\leq x'J(t)'J(t)x\leq x'J(t)'J(t)x+x'K(t)'K(t)x=x'x,\qquad x\in \mathbb{R}^{n},$$
which shows that $|J(t)x|\le| x |$, and in particular $|e_{i}'J(t)e_{i}|\le 1$ for $i=1,\ldots,n$, and thus $-n\le \tr(J(t))\le n$. Combining the above with the last equation in \eqref{e:cace}  we conclude that $\rho$ is a non-decreasing function which satisfies
\begin{equation}\label{aux1}
0\le \rho(t) \rho^\prime(t)=n-\mathrm{tr}(J(t))
\end{equation}
for almost every $t\ge 0$.
In particular, this shows that $\rho(t)\ge\rho(0)>0$, so $Z(t)\ne 0$ for any $t\ge 0$, and therefore we can find an orthonormal basis $\{\xi_1(t),\ldots,\xi_{n}(t) \}$ of $\R^{n}$ such that $\xi_1(t)=\frac{Z(t)}{|Z(t)|}$. Using the first equation in \eqref{e:cace}, we obtain
\begin{equation*}
n-\mathrm{tr}\left(J(t)\right)=\mathrm{tr}\left(I-J(t)\right)=\sum_{i=1}^{n} \xi_i^\prime(t) \left(I-J(t)\right) \xi_i(t) = \sum_{i=2}^{n} \xi_i^\prime(t) \left(I-J(t)\right) \xi_i(t).
\end{equation*}

Since 
$
| \xi_i^\prime(t) J(t)|^2= \xi_i^\prime (t) J(t) J(t)^\prime \xi_i(t) \le \xi_i^\prime(t) \xi_i(t)=1,
$
we obtain $|\xi_i^\prime(t) J(t) \xi_i(t) |\le |\xi_i^\prime(t) J(t)| | \xi_i(t) | \le |\xi_i(t)| = 1$.  Combining with the last equation this shows that $n-\mathrm{tr} (J(t))\le 2(n-1)$, which together with (\ref{aux1}) concludes the proof of the direct implication.

In order to prove the converse implication, we have to show that if the distance function $\rho$ satisfies the equation (\ref{iff condition for rho}) for almost every $t\ge 0$, we can find the matrices $J(t)$ and $K(t)$ satisfying \eqref{e:JK} and \eqref{e:cace}.

The hypothesis (\ref{iff condition for rho}) shows that $|Z(t)|=\rho(t)$ is a non-decreasing function of $t\ge0$, and in particular $\rho(t)\ge\rho(0)>0$, which shows that  $Z(t)$ is a non-zero vector for any $t\ge 0$. Consider an orthonormal basis $\{\xi_1(t),\ldots,\xi_{n}(t) \}$ of $\R^{n}$ such that $\xi_1(t)=\frac{Z(t)}{|Z(t)|}$, and let $J(t)'$ be the matrix with eigenva\-lues $1,\lambda(t),\ldots,\lambda(t)$ and corresponding eigenvectors $\xi_1(t),\xi_2(t),\ldots,\xi_n(t)$, where $\lambda(t)\in \R$ is yet to be determined.

With the above choice for $J(t)$, we have $J(t)'Z(t)=|Z(t)|J(t)'\xi_1(t)=|Z(t)|\xi_1(t)=Z(t)$, so the first equation in (\ref{e:cace}) is satisfied. We also have $\mathrm{tr}(J(t))=1+(n-1)\lambda(t)$, so the last equation in \eqref{e:cace} reduces to $\rho^2(t)=\rho^2(0)+\int_0^t 2(n-1)\left(1-\lambda(t)\right) dt$, which is satisfied for the choice  $\lambda(t)=1-\frac{1}{n-1}\rho(t)\rho^\prime(t)$ (and $0$ for definiteness if $\rho^\prime(t)$ is undefined, or if it does not satisfy (\ref{iff condition for rho})). Note that we have to choose $\lambda(t)$ only when $n\ge2$, and in this case from the hypothesis (\ref{iff condition for rho}) we have $\lambda(t)\in [-1,1]$.

To complete the proof, we need to show that we can also choose the matrix $K(t)$ such that the the second relation in \eqref{e:cace} and the relation (\ref{e:JK}) are satisfied. Considering $K(t)=P_t D_{1,t} P_t'$, where $P_t$ is the matrix with column vectors $\xi_1(t),\xi_2(t),\ldots,\xi_{n}(t)$, and $D_{1,t}, D_{2,t}$ are the diagonal matrices with diagonal entries $0,\sqrt{1-\lambda^2(t)},\dots,\sqrt{1-\lambda^2(t)}$, respectively $1,\lambda(t),\ldots,\lambda(t)$, we have $K(t)'Z(t)=|Z(t)| P_t D_{1,t}' P_t' \xi_1(t)=0$ and
\[
J(t)J(t)'+K(t)K(t)' =P_t\left(D_{2,t}^2+D_{1,t}^2\right) P_t'=P_t P_t'= I,
\]
concluding the proof of the first part of the theorem.

The second part follows now from the characterization given in the first part: if the distance function $\rho$ in non-increasing, from (\ref{iff condition for rho}) it follows that $\rho'(t)=0$ for almost every $t\ge 0$, so $\mathrm{tr}(J(t))=n$ by (\ref{e:cace}). The inequality \eqref{aux1} shows that the equality $\mathrm{tr}(J(t))=n$ can hold iff $J(t)$ is the identity matrix, so $K(t)$ is the null matrix by (\ref{e:JK}), and therefore by (\ref{e:coc})  we obtain $Y(t)=Y(0)-X(0)+X(t)$ for almost (hence all) $t\ge0$, i.e. $(X,Y)$ is a translation coupling.

Finally, the characterization given in the first part also shows that for $n=1$ we have $\rho^\prime=0$ a.e., so $\rho(t)=\rho(0)$ for all $t\ge 0$, and therefore $(X,Y)$ is a translation coupling, concluding the proof.
\end{proof}

\section{Couplings of deterministic distance on $\field{S}^{n}$}\label{s:2}

In this section we study the co-adapted couplings of Brownian motions on the unit sphere $\field{S}^{n}\subset\R^{n+1}$.  The main result is the following.

\begin{theorem}\label{t:10}
For any points $x,y\in \field{S}^{n}$ ($n\ge1$) with $x\ne\pm y$ and an arbitrary non-negative function $\rho:[0,\infty)\to[0,\infty)$ with $\rho(0)=d(x,y)$, there exists a co-adapted coupling of Brownian motions  $(X,Y)$ on $\field{S}^{n}$ starting at $(x,y)$ with deterministic distance function $d(X(t),Y(t)) =\rho(t)$ if and only if $\rho$ is continuous on $[0,\infty)$ and satisfies almost everywhere the differential inequality
\begin{equation}\label{e:301}
-(n-1)\tan\left(\frac{\rho(t)}{2}\right)\le \rho'(t)\le -(n-1)\tan\left(\frac{\rho(t)}{2}\right)+\frac{2(n-1)}{\sin(\rho(t))}, \qquad t\ge0.
\end{equation}

In particular, the only co-adapted coupling of Brownian motions on $\field{S}^{1}$ with deterministic distance function is the rotation coupling, i.e. $Y(t)=e^{i\theta}X(t)$ for some $\theta\in\R$ and all $t\ge0$.
\end{theorem}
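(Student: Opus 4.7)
The proof follows the strategy of Theorem~\ref{t:01}, with Lemma~\ref{p:1} replacing the Euclidean representation of co-adapted couplings. By Lemma~\ref{p:1}, the coupling $(X,Y)$ is realized via Stroock's SDEs \eqref{e:c:s}, driven by a co-adapted coupling $(B,W)$ of $(n+1)$-dimensional Brownian motions; applying \eqref{e:coc}--\eqref{e:JK} to $(B,W)$, I write $dW=J\,dB+K\,dC$ with $JJ'+KK'=I$ and $C$ an independent $(n+1)$-dimensional Brownian motion.

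The process to track is $\phi(t)=X(t)'Y(t)=\cos\rho(t)$. Setting $P_X=I-XX'$ and $P_Y=I-YY'$, a direct It\^o computation gives
\[
d\phi=\bigl[Y'P_X+X'P_Y J\bigr]\,dB+X'P_Y K\,dC+\bigl[-n\phi+\tr(P_XP_YJ)\bigr]\,dt.
\]
Assuming $\rho$ is deterministic, both martingale coefficients must vanish. Writing $v_X=P_XY$ and $v_Y=P_YX$ (both of length $\sin\rho$) and normalizing to $\hat v_X,\hat v_Y$, this forces the constraints $J'\hat v_Y=-\hat v_X$ and $K'\hat v_Y=0$, and the drift identity becomes, a.e.,
\[
\sin\rho\cdot\rho'=n\cos\rho-\tr(P_XP_YJ).
\]

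The central step is the trace identification. I choose an orthonormal basis $\{f_1,\ldots,f_{n+1}\}$ of $\R^{n+1}$ with $f_1=X$, $f_2=\hat v_X$ and $\{f_3,\ldots,f_{n+1}\}$ an orthonormal basis of $V=\{X,Y\}^{\perp}$, so that $\hat v_Y=\sin\rho\,f_1-\cos\rho\,f_2$. Expressing $P_XP_Y$ and $J$ as block matrices in this basis and using the constraint $J'\hat v_Y=-\hat v_X$, a short computation yields
\[
\tr(P_XP_YJ)=\cos\rho+\tr(J|_V),
\]
where $J|_V$ is the $(n-1)\times(n-1)$ block of $J$ on $V$. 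Hence $\sin\rho\cdot\rho'=(n-1)\cos\rho-\tr(J|_V)$. Since $JJ'+KK'=I$ forces $\|J\|_{\mathrm{op}}\le 1$, each diagonal entry of $J|_V$ is bounded in absolute value by $1$, so $|\tr(J|_V)|\le n-1$. The half-angle identities $(\cos\rho-1)/\sin\rho=-\tan(\rho/2)$ and $(\cos\rho+1)/\sin\rho=-\tan(\rho/2)+2/\sin\rho$ then convert this bound into exactly \eqref{e:301}.

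Conversely, given $\rho$ satisfying \eqref{e:301}, I set $\lambda(t)=\cos\rho(t)-\frac{\sin\rho(t)\rho'(t)}{n-1}$ (with $\lambda=0$ where $\rho'$ is undefined); the inequality \eqref{e:301} is precisely $|\lambda|\le 1$. In the $\{f_i\}$-basis, I define $J(t)$ block-diagonally, with $2\times 2$ block $\bigl(\begin{smallmatrix}0&-\sin\rho\\0&\cos\rho\end{smallmatrix}\bigr)$ on $\mathrm{span}(f_1,f_2)$ (which enforces $J'\hat v_Y=-\hat v_X$) and $\lambda I_{n-1}$ on $V$, and $K(t)$ via $KK'=I-JJ'$ (an explicit choice being the $YY'$-block on $\mathrm{span}(f_1,f_2)$ together with $\sqrt{1-\lambda^2}\,I_{n-1}$ on $V$). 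Standard SDE theory then produces $(X,Y)$, and It\^o's formula confirms $d(X(t),Y(t))=\rho(t)$. For the ``in particular'' claim ($n=1$), $V$ is trivial so the drift equation forces $\rho'=0$ a.e., hence $\rho\equiv\rho(0)$; a direct calculation in the $\{f_1,f_2\}$-basis shows that $(I-YY')J$ is entirely determined by the constraint $J'\hat v_Y=-\hat v_X$ regardless of the residual freedom in $J,K$, and the resulting $Y$-SDE coincides with the one satisfied by $Y=R_\theta X$, where $R_\theta$ is the rotation with $R_\theta X(0)=Y(0)$. The main obstacle is the trace identification $\tr(P_XP_YJ)=\cos\rho+\tr(J|_V)$; once this is in hand, both the inequality \eqref{e:301} and the converse construction follow by elementary algebra.
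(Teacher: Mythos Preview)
Your proof follows the paper's approach essentially line for line: Lemma~\ref{p:1} plus \eqref{e:coc}--\eqref{e:JK}, the It\^o computation for $\cos\rho$, killing the martingale part to force $J'P_YX=-P_XY$, and the trace identity $\tr(P_XP_YJ)=\cos\rho+\tr(J|_V)$ with $|\tr(J|_V)|\le n-1$; the converse constructions differ only cosmetically (the paper takes the $2\times2$ block of $J'$ to be orthogonal via $J'X=-Y$, $J'Y=-X$, whereas yours is rank one, but both satisfy the required constraints). The one omission is the paper's preliminary Gr\"onwall step showing that any $\rho$ satisfying \eqref{e:301} on $[0,T)$ necessarily stays in $(0,\pi)$, so that $\sin\rho\ne0$ and your normalizations $\hat v_X,\hat v_Y$ are well defined for all $t$.
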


\begin{proof}

To simplify the notation, we will prove an equivalent statement involving the function
\[
\eta (t)=X'(t)Y(t)=\cos\left(\rho(t) \right)
\]
Note that $\rho$ is a deterministic function iff $\eta$ is so, and the  inequality \eqref{e:301} is equivalent to
\begin{equation}\label{e:301s}
-(n-1)(\eta+1)\le \eta'(t)\le -(n-1)(\eta-1)
\end{equation}

Before proceeding to the proof, we will first argue that without loss of generality we may assume that $X(t)\ne \pm Y(t)$ for all $t\ge0$.   Assume that the theorem has been proved for all set of times in $t\in[0,T)$, where $T=\inf\{t\ge0:\eta(t)=1 \text{ or } \eta(t)=-1\}$. The double inequality in \eqref{e:301s} can be written in the equivalent form
\begin{equation}\label{e:4s}
-(n-1)\eta(t)-(n-1)\le \eta(t)'\le -(n-1)\eta(t)+(n-1),\qquad \text{for a.e. } t\in[0,T),
\end{equation}
which combined with Gr\"{o}nwall's lemma (for which the almost everywhere differentiability of $\eta$ suffices) gives
\begin{equation*}
-1+(\eta(0)+1) e^{-(n-1)t}\le \eta(t)\le 1+(\eta(0)-1)e^{-(n-1)t},\qquad t\in[0,T).
\end{equation*}

If $T$ were finite, by the continuity of $\eta$ we would obtain
\[\{-1,1\}\ni\eta(T)\subset[-1+(\eta(0)+1) e^{-(n-1)T}, 1+(\eta(0)-1)e^{-(n-1)T}]\subset(-1,1),
 \]
a contradiction. We conclude that without loss of generality we may assume that $X(t)\ne \pm Y(t)$, for all $t\ge0$, or equivalent that $\eta(t)\in (-1,1)$ for all $t\ge 0$.


Assume now that $(X,Y)$ is an arbitrary co-adapted coupling of Brownian motions on the sphere $\field{S}^{n}$.  By Lemma~\ref{p:1} there exists a co-adapted coupling $(B,W)$ of $(n+1)$-dimensional Brownian motions such that
\begin{equation}
\begin{cases}
X(t)=X(0)+\int_0^t \left( I-X(s)X(s)'\right) dB(s)-\frac{n}{2}\int_0^t X(s)ds \\
Y(t)=Y(0)+\int_0^t \left( I-Y(s)Y(s)'\right) dW(s)-\frac{n}{2}\int_0^t Y(s)ds,
\end{cases}
\end{equation}
and by Lemma $6$ in \cite{Kendall} we also have
\[
W(t)=\int_0^t J(s)dB(s)+\int_0^t K(s)dC(s),
\]
where $C$ is a $(n+1)$-dimensional Brownian motion independent of $B$, and $J(t), K(t)$ are $(n+1)\times (n+1)$ matrices satisfying
\begin{equation}  \label{Eq 0}
J(t)J(t)'+K(t)K(t)'=I, \qquad t\ge0.
\end{equation}

Let $U(t)=I-X(t)X(t)'$ and $V(t)=I-Y(t)Y(t)'$ (note that $U(t)$ and $V(t)$ are symmetric matrices, with $U(t)^{2}=U(t)$ and $V(t)^{2}=V(t)$).  We have
\begin{equation}\label{e:1s}
\begin{split}
d\eta(t)&=X(t)'dY(t)+Y(t)'dX(t)+\sum_{k=1}^{n+1}d\langle X_{k}, Y_{k}\rangle_t \\
&=(X(t)'V(t)J(t)+Y(t)'U(t))dB(t)+X'(t)V(t)K(t)dC(t) \\
&\qquad-nX(t)'Y(t)dt+\tr(U(t)J(t)'V(t))dt \\
&=(X(t)'V(t)J(t)+Y(t)'U(t))dB(t)+X'(t)V(t)K(t)dC(t) \\
&\qquad-n\eta(t)dt+\tr(U(t)J(t)'V(t))dt.
\end{split}
\end{equation}
Enforcing that $\eta$ is a deterministic function requires the cancelation of the martingale part of it. Since the Brownian motions $B$ and $C$  are independent, this is equivalent to
\[
J(t)'V(t)X(t)=-U(t)Y(t) \text{ and } K(t)'V(t)X(t)=0.
\]
Using the definition of $\eta(t)$, $U(t)$, and $V(t)$, the above is also equivalent to
\begin{equation}\label{e:2s}
J(t)'(X(t)-\eta(t)Y(t))=\eta(t)X(t)-Y(t)\text{ and }K(t)'(X(t)-\eta(t)Y(t))=0.
\end{equation}

To compute the trace $\tr(U(t)J(t)'V(t))$ in \eqref{e:1s} we will use the following representation of the trace of a $(n+1)\times(n+1)$ matrix $A$: $\mathrm{tr}(A)=\sum_{i=1}^{n}a_{ii}$, where $Af_{i}=\sum_{j=1}^{n}a_{ij}f_{j}$ and $f_{1},\ldots,f_{n+1}$ is an arbitrary basis of $\R^{n+1}$.  Since $X(t)\ne \pm Y(t)$, the subspace $\mathcal{U}(t)$ spanned by $X(t)$ and $Y(t)$ is $2$-dimensional, and let $\mathcal{V}(t)\subset\R^{n+1}$ be the $(n-1)$-dimensional subspace orthogonal to it.  Using \eqref{e:2s}, we obtain
\[
\begin{cases}
U(t)J(t)'V(t)X(t)=\eta(t)X(t)-Y(t)\\
U(t)J(t)'V(t)Y(t)=0.
\end{cases}
\]

If $n=1$, $X(t),Y(t)$ is a basis of $\mathcal{U}(t)=\R^2$, and from the above representation of the trace we obtain $\tr(U(t)J(t)'V(t))=\eta(t)$, which together with \eqref{e:1s} proves the claim \eqref{e:301s} in this case. If $n\ge2$, considering an arbitrary orthonormal basis $f_{1}(t),\ldots,f_{n-1}(t)$ of $\mathcal{V}_{t}$ and using the above representation of the trace, we obtain

\[
\tr(U(t)J(t)'V(t))=\eta(t)+\sum_{j=1}^{n-1}f_j(t)'  U(t)J(t)'V(t) f_j(t),
\]
and combining with \eqref{e:1s}, we obtain
\begin{equation}\label{e:3s}
d(\eta(t))=-(n-1)\eta(t)dt+\sum_{j=1}^{n-1}f_{j}(t)'U(t)J(t)'V(t)f_{j}(t) dt.
\end{equation}

The above proves the claim \eqref{e:301s}, if we show that each term in the sum above lies between $-1$ and $1$. To show this, note that the condition \eqref{Eq 0} implies that $|J(t)'\xi|\le |\xi|$ for any vector $\xi\in\R^{n+1}$,  so the operator norm of $J(t)'$ is at most $1$.  It follows that the operator norm of $J(t)$ is also at most $1$, and since $U(t)$ and $V(t)$ are projection operators, it is trivial to see that $|f_{j}(t)'U(t)J(t)'V(t)f_{j}(t)|\le1$, for any $j=1,\ldots,n-1$, concluding the proof of the direct implication of the theorem.

To prove the converse, we have to show that if $\eta$ satisfies \eqref{e:301s}, we can find $(n+1)\times(n+1)$ matrices $J(t),K(t)$ which satisfy \eqref{Eq 0}, \eqref{e:2s} and \eqref{e:3s}.  To this aim consider the matrix $J(t)$ defined by
\[
\begin{cases}
J(t)'X(t)=-Y(t) \\
J(t)'Y(t)=-X(t)\\
J(t)'V=\gamma(t)V,\qquad \text{ for any }V\in\mathcal{V}(t)
\end{cases}
,
\]
where
\[
\gamma(t)=1-\frac{\eta'(t)+(n-1)\eta(t)}{n-1},
\]
which from the condition \eqref{e:301s} is guaranteed to belong to $[-1,1]$.  Consider also the matrix $K(t)$  defined by
\[
\begin{cases}
K(t)'X(t)=0 \\
K(t)'Y(t)=0\\
K(t)'V=\sqrt{1-\gamma^2(t)}V,\qquad \text{ for any }V\in\mathcal{V}(t)
\end{cases}
,
\]
and note that with the above choices we have $J(t)=J(t)'$, $K(t)=K(t)'$, $J(t)J(t)'+K(t)K(t)'=I$.  In addition it is easy to check \eqref{e:2s} and \eqref{e:3s}, hence concluding the proof.
\end{proof}

\section{Couplings of deterministic distance on the hyperbolic space}\label{s:3}

\begin{theorem}\label{t:20} For any distinct points $x,y\in \field{H}^{n}$ ($n\ge1$)  and an arbitrary non-negative function $\rho:[0,\infty)\to[0,\infty)$ with $\rho(0)=d(x,y)$, there exists a co-adapted coupling of Brownian motions  $(X,Y)$ on $\field{H}^{n}$ starting at $(x,y)$ with deterministic distance function $\rho(t)=d(X(t),Y(t))$ if and only if $\rho$ is continuous on $[0,\infty)$ and satisfies almost everywhere the differential inequality
\begin{equation}\label{eh:301}
(n-1)\tanh\left(\frac{\rho(t)}{2}\right)\le \rho'(t)\le (n-1)\tanh\left(\frac{\rho(t)}{2}\right)+\frac{2(n-1)}{\sinh(\rho(t))}, \qquad t\ge0.
\end{equation}

In particular, in dimension $n=1$, the only co-adapted Brownian coupling $(X,Y)$ of deterministic distance on $\field{H}^1$ is given by $Y(t)= \theta X(t)$, $t\ge0$, for some $\theta>0$.
\end{theorem}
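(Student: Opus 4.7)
The plan is to mirror the structure of the proof of Theorem~\ref{t:10}, using Lemma~\ref{p:2h} in place of Lemma~\ref{p:1} and tracking the half-space geometry. First, the lower bound $\rho'(t)\ge (n-1)\tanh(\rho(t)/2)>0$ (for $\rho>0$) together with a Gr\"{o}nwall comparison with the ODE $\dot r=(n-1)\tanh(r/2)$ forces $\rho(t)$ to stay strictly positive, so $X(t)\ne Y(t)$ throughout. By Lemma~\ref{p:2h} the coupling is driven by an $n$-dimensional co-adapted pair $(B,W)$, and Kendall's representation (as used in Theorem~\ref{t:01}) gives $dW=J\,dB+K\,dC$ with $C$ independent of $B$ and $JJ'+KK'=I$. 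Work with $\eta(t)=\cosh(\rho(t))=1+|X(t)-Y(t)|^2/(2X_1(t)Y_1(t))$; using $\sinh(\rho)\tanh(\rho/2)=\cosh(\rho)-1$, the differential inequality \eqref{eh:301} is equivalent to $(n-1)(\eta(t)-1)\le \eta'(t)\le (n-1)(\eta(t)+1)$ a.e.

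Applying It\^o's formula to $\eta=f(X,Y)$ with $f(x,y)=1+|x-y|^2/(2x_1y_1)$, the drift separates into an \emph{independent} part and a coupling-dependent correction:
\[
d\eta(t)=\text{mart.}+\Bigl[\tfrac{1}{2}\Delta_H^{(x)}f+\tfrac{1}{2}\Delta_H^{(y)}f+X_1(t)Y_1(t)\,\tr(\partial^2_{xy}f\cdot J(t))\Bigr]\,dt.
\]
The radial Laplacian formula $\Delta_H\phi(r)=\phi''(r)+(n-1)\coth(r)\phi'(r)$ with $\phi=\cosh$ yields $\Delta_H^{(x)}f=\Delta_H^{(y)}f=n\eta$, so the independent drift equals $n\eta$. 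Requiring $\eta$ to be deterministic cancels both martingale parts, producing $X_1\nabla_xf+Y_1J'\nabla_yf=0$ and $K'\nabla_yf=0$. A short calculation in the half-space model gives $|X_1\nabla_xf|_E=|Y_1\nabla_yf|_E=\sinh(\rho)$, so the first constraint prescribes the action of $J'$ on the unit Euclidean vector $u=Y_1\nabla_yf/\sinh(\rho)$: it must send $u$ isometrically to $v=-X_1\nabla_xf/\sinh(\rho)$. Combined with $JJ'+KK'=I$ and $K'u=0$, $J'$ is fully determined on $\mathrm{span}(u)$ and free on $u^\perp$ subject to operator norm at most $1$. Decomposing $\tr(\partial^2_{xy}f\cdot J)$ in an orthonormal basis $\{u,\xi_2,\ldots,\xi_n\}$, the geodesic contribution from $u$ works out to $-\eta/(X_1Y_1)$ while each of the $n-1$ transverse contributions is bounded in absolute value by $1/(X_1Y_1)$; multiplying by $X_1Y_1$ gives $X_1Y_1\tr(\partial^2_{xy}f\cdot J)=-\eta+\beta(t)$ with $|\beta(t)|\le n-1$, and combining with the independent drift yields $\eta'(t)=(n-1)\eta(t)+\beta(t)$, which is exactly the reformulated \eqref{eh:301}.

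For the converse, given $\eta$ satisfying \eqref{eh:301}, set the geodesic part of $J'$ as dictated above and parametrize $J'$ on $u^\perp$ as multiplication by a common scalar $\gamma(t)\in[-1,1]$ chosen so that $\beta(t)=\eta'(t)-(n-1)\eta(t)$ is realized; let $K(t)$ vanish on $u$ and act by $\sqrt{1-\gamma(t)^2}$ on $u^\perp$, ensuring $JJ'+KK'=I$. The case $n=1$ follows from Theorem~\ref{t:01}: the logarithm is a Riemannian isometry $\field{H}^1\to\R$ conjugating hyperbolic Brownian motion to standard Brownian motion, so any co-adapted coupling $(X,Y)$ on $\field{H}^1$ with deterministic distance descends to one on $\R$, which by Theorem~\ref{t:01} must be translation, i.e.\ $\log Y-\log X\equiv c$, giving $Y=\theta X$ with $\theta=e^c>0$ (positivity to stay in $\field{H}^1$). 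The main technical obstacle is the algebraic identity at the heart of the direct implication: verifying the clean split $X_1Y_1\tr(\partial^2_{xy}f\cdot J)=-\eta+\beta$. In the spherical case $\partial^2_{xy}(x'y)=I$ makes the analogous trace $\tr(UJ'V)$ transparent, whereas here $\partial^2_{xy}f$ contains nontrivial off-diagonal terms arising from differentiating $1/(x_1y_1)$, and the distinguished vertical direction $e_1$ breaks the rotational symmetry, so both the geometric identification of the geodesic direction in the ambient $\R^n$ and the clean "geodesic contribution $=-\eta$, transverse contributions $\in[-1,1]$" decomposition require careful manipulation.
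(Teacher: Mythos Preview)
Your overall plan is sound and your reformulation in terms of $\eta=\cosh\rho$, the martingale-cancellation conditions $X_1\nabla_xf+Y_1J'\nabla_yf=0$, $K'\nabla_yf=0$, and the use of the radial Laplacian for the ``independent'' drift are all correct and match the paper's computations (after the harmless shift $\eta_{\mathrm{paper}}=\eta-1$). The $n=1$ argument via the isometry $\log:\field{H}^1\to\R$ is clean. However, there is a genuine gap in each of the two implications.

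\textbf{Direct implication.} The claimed decomposition ``geodesic contribution $=-\eta/(X_1Y_1)$, each transverse contribution $\in[-1,1]/(X_1Y_1)$'' is the heart of the matter, but the basis $\{u,\xi_2,\dots,\xi_n\}$ you propose does not produce it. The constraint fixes $J'u$, whereas $\tr(HJ)=\sum_k \xi_k'HJ\xi_k$ requires knowledge of $J\xi_k$; and unlike the spherical case (where $\partial^2_{xy}(x'y)=I$), here $X_1Y_1\,\partial^2_{xy}f$ equals $-I$ only on the $(n-1)\times(n-1)$ block orthogonal to $e_1$ and has nontrivial first row and column. Concretely the drift takes the form \[\eta'=(1+\eta_{\mathrm{paper}})(n-2-e_1'Je_1)-\widetilde{\tr}(J)+\tfrac{X_1^2+Y_1^2}{X_1Y_1},\] so what must be bounded is the \emph{weighted} combination $(1+\eta_{\mathrm{paper}})\,e_1'Je_1+\xi_2'J\xi_2$ (with $\xi_2=\tilde Z/|\tilde Z|$), not a sum of unweighted diagonal entries. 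The paper isolates exactly this in Lemma~\ref{l:6}: given $|Ax|\le|x|$ and $A(m\xi_1+l\xi_2)=p\xi_1+q\xi_2$ with $m^2+l^2=p^2+q^2$, one has sharp bounds on $r\,\xi_1'A\xi_1+s\,\xi_2'A\xi_2$. Applied with $A=J'$, $\xi_1=e_1$, $\xi_2=\tilde Z/|\tilde Z|$, $r=1+\eta_{\mathrm{paper}}$, $s=1$, this yields precisely the $\pm1$ window, and the remaining $n-2$ diagonal entries on $\{e_1,\tilde Z\}^\perp$ contribute the further $\pm(n-2)$. Your asserted split is equivalent to this, but it does not fall out of the $\{u,\xi_k\}$ decomposition; some two-dimensional argument on $\mathrm{span}\{e_1,\tilde Z\}$ (which contains both $u$ and $v$) is unavoidable.

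\textbf{Converse.} Your construction of $J$ is actually wrong. If you set $J'u=v$ and $J'\xi_k=\gamma\,\xi_k$ for $\xi_k\perp u$, then in the basis $\{u,\xi_2,\dots\}$ one computes $J J'=(J')'J'=\begin{pmatrix}1 & \gamma\beta'\\ \gamma\beta & \gamma^2 I\end{pmatrix}$, where $\beta_k=\langle v,\xi_k\rangle$; with your $K$ (zero on $u$, $\sqrt{1-\gamma^2}$ on $u^\perp$) one gets $JJ'+KK'=\begin{pmatrix}1 & \gamma\beta'\\ \gamma\beta & I\end{pmatrix}$, which equals $I$ only if $\gamma=0$ or $v\in\mathrm{span}(u)$. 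In the half-space model $u$ and $v$ are generically not parallel, so $JJ'+KK'=I$ fails. The paper's remedy is again Lemma~\ref{l:6}: on the two-dimensional subspace $\mathcal U=\mathrm{span}\{e_1,\tilde Z\}$ (which contains both $u$ and $v$) it produces an explicit contraction $A_\phi$ of the form $V\,\mathrm{diag}(1,d)\,U'$ with $|d|\le1$ satisfying the required constraint \emph{and} realising any admissible value of the weighted diagonal sum; one then sets $J'=A_\phi$ on $\mathcal U$ and $J'=\gamma I$ on $\mathcal U^\perp$, with $K=(I-A_\phi'A_\phi)^{1/2}$ on $\mathcal U$ and $\sqrt{1-\gamma^2}\,I$ on $\mathcal U^\perp$. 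The point is that the ``geodesic'' prescription must be thickened to a $2$-plane, not a line, before the orthogonal complement can be handled by a scalar.
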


\begin{proof}  Before proceeding with the proof, note that by Lemma \ref{p:2h}, the Brownian coupling $(X,Y)$ on $\field{H}^n$ is co-adapted iff there exists a co-adapted coupling $(B,W)$ of  Euclidean Brownian motions  such that
\[
\left\{
\begin{split}
X(t)&=X(0)+\int_0^t X_1(s)dB_{s}-\frac{n-2}{2}\int_0^t X_1(s)e_{1}ds \\
Y(t)&=Y(0)+\int_0^t Y_1(s)dW_{s}-\frac{n-2}{2}\int_0^t Y_1(s)e_{1}ds
\end{split}
\right.
, \qquad t\ge0,
\]
and $dW(t)=J(t)dB(t)+K(t)dC(t)$, with $B,C$ independent $n$-dimensional Euclidean Brownian motions and  $J(t),K(t)$ $n\times n$ matrices satisfying $J(t)J(t)'+K(t)K(t)'=I$.  Denoting $Z(t)=X(t)-Y(t)$, the statement of the theorem can be expressed equivalently in terms of the function
\begin{equation}\label{eta:h}
\eta(t)=\frac{|Z(t)|^{2}}{2X_{1}(t)Y_{1}(t)}.
\end{equation}
The equation \eqref{e1:dh} shows that $\cosh(d(X(t),Y(t)))=\eta(t)+1$, and the inequality \eqref{eh:301} is therefore equivalent to
\begin{equation}\label{eh:5}
\eta(t)(n-1) \le \eta'(t)\le \eta(t)(n-1)+2(n-1), \qquad t\ge0.
\end{equation}

In order to prove the direct implication, we assume that $(X,Y)$ is a co-adapted Brownian coupling on $\field{H}^n$ with deterministic distance $\rho$ and show that \eqref{eh:5} holds. We first note, just as in the spherical case, that without loss of generality we may assume that $X(t)\ne Y(t)$ for all $t\ge 0$: if \eqref{eh:5} is satisfied for $0\le t<T=\inf \{s\ge0: \eta(s)=0\}$, then
\[
\eta(0)e^{(n-1)t}\le \eta(t), \qquad t\in[0,T),
\]
and thus $\eta$ does not vanish on $[0,T)$.  If $T$ were finite, by the continuity of $\eta$ we would obtain $\eta(T)=0$, a contradiction. Consequently we may assume that $X(t)\ne Y(t)$ for all $t\ge0$.

Using the preliminary remarks, we obtain
\[
dZ(t)=(X_{1}(t)I-Y_{1}(t)J(t))dB(t)-Y_{1}(t)K(t)dC(t)-\frac{n-2}{2}Z_{1}(t)e_{1}dt,
\]
and since $\eta(t)$ is a function of bounded variation, from \eqref{eta:h} we obtain
\begin{equation}\label{eh:2}
d|Z(t)|^{2}=2X_{1}(t)Y_{1}(t)\eta'(t)dt+2\eta(t) d(X_{1}(t)Y_{1}(t)).
\end{equation}
Since
\[
\begin{split}
d(X_{1}(t)Y_{1}(t))&=Y_{1}(t)e_{1}'dX(t)+X_{1}(t)e_{1}'dY(t)+d\langle X_{1},Y_{1} \rangle_{t} \\
&=X_{1}(t)Y_{1}(t)e_{1}'\left(I+J(t)\right)dB(t)+X_{1}(t)Y_{1}(t)e_{1}'K(t)dC(t) \\
&\quad+X_{1}(t)Y_{1}(t)\left(e_{1}'J(t)e_{1}-(n-2)\right)dt,
\end{split}
\]
combining with the above we conclude
\[
\begin{split}
d|Z(t)|^{2}&= 2\eta(t) X_{1}(t)Y_{1}(t)e_{1}'\left(I+J(t)\right)dB(t)+ 2\eta(t) X_{1}(t)Y_{1}(t)e_{1}'K(t)dC(t) \\
&\quad +2 X_{1}(t)Y_{1}(t)\left(\eta(t) \left(e_{1}'J(t)e_{1}-(n-2)\right)+ \eta'(t)\right) dt,
\end{split}
\]

Alternately, we can determine the semimartingale decomposition of $|Z(t)|^2$ as follows:
\[
\begin{split}
d|Z(t)|^{2}&=2Z(t)'dZ(t)+ \sum_{i=1}^{n}d\langle Z_{i} \rangle_{t}\\
&=2M(t)dB(t)+2N(t)dC(t)-(n-2)Z_1^2(t) dt+ \sum_{i=1}^{n}d\langle Z_{i} \rangle_{t},
\end{split}
\]
where
\[
M(t)=X_{1}(t)Z(t)'-Y_{1}(t)Z(t)'J(t),\qquad N(t)=-Y_{1}(t)Z(t)'K(t),
\]
and
\[
\begin{split}
\sum_{i=1}^{n}d\langle Z_{i} \rangle_{t}&=\tr\left(\left(X_{1}(t)I-Y_{1}(t)J(t)\right)\left(X_{1}(t)I-Y_{1}(t)J(t)\right)'+Y_{1}^{2}(t)K(t)K(t)'\right)dt \\
&=\left( n X_{1}^{2}(t)+n Y_{1}^{2}(t)-2X_{1}(t)Y_{1}(t)\tr(J(t))\right)dt.
\end{split}
\]

Comparing the two semimartingale representations of $|Z|^2$ above, and using the independence of the Brownian motions $B$ and $C$, we conclude
\[
\left\{
\begin{split}
&M(t)=\eta(t)X_{1}(t)Y_{1}(t)e_{1}'(I+J(t))\\
&N(t)=\eta(t)X_{1}(t)Y_{1}(t)e_{1}'K(t)\\
&X_{1}^{2}(t)+Y_{1}^{2}(t)+X_{1}(t)Y_{1}(t)\left((1+\eta(t))\left(n-2-e_1'J(t)e_1\right)-\eta'(t)-\tilde{\tr}(J(t))\right)=0.
\end{split}
\right.
\]
where $\tilde{\tr}(J(t))=\sum_{i=2}^{n}e_{i}'J(t)e_{i}$. Using the definitions of $M(t)$, $N(t)$, and $\eta$ (also recall the notation $\tilde{x}=(0,x_2,\ldots,x_n)$), the above conditions are equivalent to
\begin{equation}\label{eh:9}
\begin{cases}
\begin{split}
J(t)'&\left(\left(|\tilde{Z}(t)|^{2}+ X_{1}^{2}(t)-Y_{1}^{2}(t)\right)e_{1}+2Y_{1}(t)\tilde{Z}(t)\right)\\
&\qquad\qquad=\left((-|\tilde{Z}(t)|^{2}+X_{1}^{2}(t)-Y_{1}^{2}(t))e_{1}+2X_{1}(t)\tilde{Z}(t)\right)
\end{split} \\ \\

K(t)'\left(\left(|\tilde{Z}(t)|^{2}+ X_{1}^{2}(t)-Y_{1}^{2}(t)\right)e_{1}+2Y_{1}(t)\tilde{Z}(t)\right)=0\\
\end{cases}
\end{equation}
and
\begin{equation}\label{eh:7}
\eta'(t)=(1+\eta(t))\left(n-2-e_{1}'J(t)e_{1}\right)-\tilde{\tr}(J(t))+\frac{X_{1}^{2}(t)+Y_{1}^{2}(t)}{X_{1}(t)Y_{1}(t)}.
\end{equation}

Consider first the case when $\tilde{Z}(t)=0$. Since $Z(t)\ne0$ we must have $X_1(t)\ne Y_1(t)$, and from the first equation in \eqref{eh:9} we obtain $J(t)'e_{1}=e_{1}$.  Using this, $1+\eta(t)=\frac{X_{1}^{2}(t)+Y_{1}^{2}(t)}{2X_{1}(t)Y_{1}(t)}$, and \eqref{eh:7} gives
\begin{equation}\label{eh:8}
\eta'(t)=\eta(t)(n-1)+(n-1)-\tilde{\tr}(J(t)).
\end{equation}
Since $J(t)J(t)'+K(t)K(t)'=I$, we obtain $-1\le e_{i}'J(t)'e_{i}\le 1$, so $-(n-1)\le\tilde{\tr}(J(t))\le n-1$, and thus we arrive at \eqref{eh:5}.

For the case of $\tilde{Z}(t)\ne 0$ we will use the following result.

\begin{lemma}\label{l:6}  Assume $A:\R^{n}\to \R^{n}$ is a linear map such  that $|Ax |\le |x|$ for any $x\in\R^{n}$.  Let $\xi_{1},\xi_{2}\in\R^{n}$ be orthonormal vectors so that
\begin{equation}\label{eh:3}
A(m\xi_{1}+l\xi_{2})=p\xi_{1}+q\xi_{2}
\end{equation}
for some $m,l,p,q\in\R$ with $m^2+l^2=p^2+q^2\ne0$. The following hold true:
\begin{enumerate}
\item  $(I_{n}-A'A)^{1/2}(m\xi_{1}+l\xi_{2})=0$.

\item We have for any $r,s\in\R$,
\[
\frac{rmp+slq-|rlq+smp|}{m^{2}+l^{2}}\le r \xi_{1}'A\xi_{1}+s \xi_{2}'A\xi_{2}\le \frac{rmp+slq+|rlq+smp|}{m^{2}+l^{2}}.
\]

\item Moreover, for each $r,s,\phi\in\R$ satisfying
\[
\frac{rmp+slq-|rlq+smp|}{m^{2}+l^{2}}\le \phi \le \frac{rmp+slq+|rlq+smp|}{m^{2}+l^{2}}
\]
we have $r \xi_{1}'A_{\phi}\xi_{1}+s \xi_{2}'A_{\phi}\xi_{2}=\phi$, where $A_{\phi}$ is the map defined by
\begin{equation}\label{eh:6}
\begin{cases}
A_{\phi}\xi_{1}=\displaystyle \frac{mp+dlq}{m^{2}+l^{2}}\xi_{1}+\frac{mq-dlp}{m^{2}+l^{2}}\xi_{2} \\
A_{\phi}\xi_{2}=\displaystyle\frac{lp-dmq}{m^{2}+l^{2}} \xi_{1}+\frac{lq+dmp}{m^{2}+l^{2}}\xi_{2}\\
A_{\phi}\xi=0,\, \text{ for }\xi\text{ orthogonal to }\xi_{1},\xi_{2},
\end{cases}
\end{equation}
with $d=\frac{\phi(m^{2}+l^{2})-(rmp+slq)}{rlq+smp}$.
\end{enumerate}
\end{lemma}

\begin{proof}

\begin{enumerate}
\item
Note that if $D$ is a symmetric, non-negative definite matrix for which $D\xi=0$, then $| D^{1/2}\xi|^{2}=\xi'D\xi=0$, and therefore $D^{1/2}\xi=0$.
Using this with $D=I-A'A$ (which is non-negative definite) and $\xi=m\xi_{1}+l\xi_{2}$, one can check by  direct calculation that $\xi'D\xi=m^{2}+l^{2}-p^{2}-q^{2}=0$ from which the first claim follows.

\item
It is clear that we have to focus on the action of the matrix $A$ on the space $\mathcal{U}$ generated by $\xi_{1}$ and $\xi_{2}$.  For convenience,  let $\mathcal{V}$ be the subspace orthogonal to $\mathcal{U}$, and consider the restriction of $\tilde{A}$ to the subspace $\mathcal{U}$, more precisely, $\tilde{A}\xi=\Pi A\xi$ for where $\Pi$ is the orthogonal projection onto $\mathcal{U}$.

Writing vectors in $\mathcal{U}$ in terms of the orthonormal basis $\xi_{1},\xi_{2}$, we can simply assume that $\R^{n}$ is replaced by $\R^{2}$, and the map $A$ is replaced by  $\tilde{A}$. It is clear that the relation \eqref{eh:3} continues to hold with $A$ replaced by $\tilde{A}$.  For simplicity we will also assume that  $m^{2}+l^{2}=p^{2}+q^{2}=1$.  Consider
\[
U=\mat{m &-l \\ l & m}\text{ and }V=\mat{p&-q\\q&p}.
\]
We obviously have
\[
U\mat{1\\0}=\mat{m\\l} \text{ and } V\mat{1\\0}=\mat{p \\ q},
\]
so $\tilde{A}Ue_{1}=Ve_{1}$.  Denoting $B=V'\tilde{A}U=\mat{a & b\\ c &d}$, then
\[
Be_{1}=e_{1}
\]
which means that $a=1$ and $c=0$.  On the other hand, we also have that $|Bx|\le |x|$ for any $x\in\R^{2}$, which implies that for $x=\mat{t\\s}$,
\[
(t+bs)^{2}+d^{2}s^{2}\le t^{2}+s^{2}
\]
for any real numbers $t,s$.  Thus, we must have $b=0$ and $-1\le f\le 1$.  Therefore,
\[
\tilde{A}=VBU' =\mat{mp+dlq & pl-dmq \\qm-dpl & lq+dmp}.
\]
Thus,
\[
re_{1}'\tilde{A}e_{1}+se_{2}'\tilde{A}e_{2}= rmp+slq+d(rlq+smp).
\]
Since $-1\le d\le 1$, the rest follows easily.

\item
This follows from a direct calculation and is left to the reader.
\end{enumerate}
\end{proof}

Using the above lemma with $A=J(t)'$, $\xi_{1}=e_{1}$, $\xi_{2}=\tilde{Z}/|\tilde{Z}|$, $m=|\tilde{Z}_{t}|^{2}+X_{1}^{2}(t)-Y_{1}^{2}(t)$, $l=2Y_{1}(t)|\tilde{Z}_{t}|$, $p=-|\tilde{Z}_{t}|^{2}+X_{1}^{2}(t)-Y_{1}^{2}(t)$, $q=2X_{1}(t)|\tilde{Z}_{t}|$, $r=1+\eta(t)$, and $s=1$, we have
\[
\frac{rmp+slq}{m^{2}+l^{2}}=\frac{X_{1}^{2}(t)+Y_{1}^{2}(t)}{X_{1}(t)Y_{1}(t)}-\eta(t)-1 \quad\text{ and } \quad\frac{rlq+smp}{m^{2}+l^{2}}=1,
\]
and therefore we obtain
\[
a-1\le (1+\eta(t))e_{1}'J(t)'e_{1}+\xi_{2}'J(t)'\xi_{2} \le a+1,
\]
where
\[
a=\frac{X_{1}^{2}(t)+Y_{1}^{2}(t)}{X_{1}(t)Y_{1}(t)}-\eta(t)-1.
\]

Completing $\{e_1,\xi_2\}$ to an orthonormal basis $\{e_1,\xi_2,\ldots,\xi_n\}$ of $\R^n$, and recalling that $-1\le \xi_i'J(t)'\xi_i\le 1$ for any unit vector $\xi_i$, we have
\begin{equation*}
a-n+1\le(1+\eta(t))e_i'J(t)'e_i+\tilde{\mathrm{tr}}(J(t))\le a+n-1,
\end{equation*}
and therefore using \eqref{eh:7} we obtain
\[
\eta(t)(n-1)+(n-2)-\tr(\tilde{J})\le  \eta'(t)\le \eta(t)(n-1)+n-\tr(\tilde{J}),
\]
concluding the proof of the direct implication.

Conversely, assuming \eqref{eh:5} holds, we have to show that we can choose matrices $J(t),K(t)$ such that $J(t)J(t)'+K(t)K(t)'=I$, \eqref{eh:9} and \eqref{eh:7} are satisfied.

In the case $n=1$, choosing $J(t)=-1$ and $K(t)=0$ proves the claim.

Consider now the case $n\ge2$. If $\tilde{Z}(t)=0$, the equation \eqref{eh:7} is equivalent to \eqref{eh:8}, and therefore the claim is satisfied for the choice
\[
J(t)e_{i}=
\begin{cases}
e_{i} & i=1 \\
\gamma(t)e_{i} & i\ne1
\end{cases}
\quad\text{ and }\quad
K(t)e_{i}=
\begin{cases}
0 & i=1 \\
\sqrt{1-\gamma^{2}(t)}e_{i} & i\ne1
\end{cases}
\]
where
\begin{equation}\label{eh:10}
\gamma(t)=1+\eta(t)-\frac{\eta'(t)}{n-1}\in[-1,1].
\end{equation}
For the case $\tilde{Z}(t)\ne0$, we will use Lemma \ref{l:6} as follows. Consider $\xi_{1}=e_{1}$, $\xi_{2}=\tilde{Z}(t)/|\tilde{Z}(t)|$, $m=|\tilde{Z}(t)|^{2}+X_{1}^{2}(t)-Y_{1}^{2}(t)$, $l=2Y_{1}(t)|\tilde{Z}(t)|$, $p=-|\tilde{Z}(t)|^{2}+X_{1}^{2}(t)-Y_{1}^{2}(t)$, $q=2X_{1}(t)|\tilde{Z}(t)|$, $r=1+\eta(t)$, and $s=1$, for which
\[
\frac{rmp+slq}{m^{2}+l^{2}}=\frac{X_{1}^{2}(t)+Y_{1}^{2}(t)}{X_{1}(t)Y_{1}(t)}-\eta(t)-1 \quad\text{ and } \quad\frac{rlq+smp}{m^{2}+l^{2}}=1.
\]

Using the notation of the lemma, define the $n\times n$ matrices $J$ and $K$ by
\[
J(t)\xi=
\begin{cases}
A_{\phi(t)}'\xi, & \xi\in \mathcal{U}(t) \\
\gamma(t)\xi, & \xi\in \mathcal{V}(t)
\end{cases}
\text{ and }
K(t)\xi=
\begin{cases}
(I-A_{\phi(t)}'A_{\phi(t)})^{1/2}\xi, & \xi\in \mathcal{U}(t) \\
\sqrt{1-\gamma^{2}(t)}\xi, & \xi\in\mathcal{V}(t)
\end{cases}
,
\]
where $\mathcal{U}(t) = \mathrm{span} \{ e_{1},\tilde{Z}(t) \}$, $\mathcal{V}(t)\subset\R^n$ is the subspace orthogonal to it, $\gamma(t)$ is given by \eqref{eh:10}, and $\phi(t)$ (whose value will be chosen below) satisfies
\begin{equation}\label{eh:11}
\frac{X_{1}^{2}(t)+Y_{1}^{2}(t)}{X_{1}(t)Y_{1}(t)}-\eta(t)-2\le \phi(t)\le\frac{X_{1}^{2}(t)+Y_{1}^{2}(t)}{X_{1}(t)Y_{1}(t)}-\eta(t).
\end{equation}

Note that $I-A_{\phi(t)}' A_{\phi(t)}$ is a symmetric non-negative definite matrix and $\gamma(t)\in[-1,1]$, so the matrix $K(t)$ is well defined (for definiteness, if $\eta'(t)$ is not defined, or if it does not satisfy \eqref{eh:5}, we consider $\gamma (t)=0$).

With the above choice for $J$ and $K$, the condition  $J(t)J(t)'+K(t)K(t)'=I$ is clearly satisfied for any $\phi(t)$ satisfying \eqref{eh:11}. The first equation in \eqref{eh:9}can be written equivalently as
\[
J(t)'\left(m\xi_1+l\xi_2\right)=A_{\phi(t)}\left(m\xi_1+l\xi_2\right)=p\xi_1+q\xi_2,
\]
and is satisfied by the definition \eqref{eh:6} of the matrix $A_{\phi(t)}$, for any $\phi(t)$ sa\-tis\-fying \eqref{eh:11}. The above relation (together with $|\gamma(t)|\le 1$) also shows that we can apply the first part of Lemma \ref{l:6} with $A=A_{\phi(t)}$, thus obtaining
\[
K'(t)\left( m\xi_1+l\xi_2\right)=(I-A_{\phi(t)}'A_{\phi(t)})^{1/2}\left( m\xi_1+l\xi_2\right)=0,
\]
so the second equation in \eqref{eh:9} is also satisfied, for any any $\phi(t)$ verifying \eqref{eh:11}.

Finally, we will choose $\phi(t)$ so that the equation \eqref{eh:7} is also satisfied. The last part of the lemma shows that
\[
(1+\eta(t))e_1'J(t)e_1+\frac{\tilde{Z}(t)'}{|\tilde{Z}(t)}J(t)\frac{\tilde{Z}(t)}{|\tilde{Z}(t)}=\phi(t),
\]
and completing $\{e_1,\frac{\tilde{Z}(t)'}{|\tilde{Z}(t)}\}$ to an orthonormal basis of $\R^n$, as in the previous part of the proof, we obtain $(1+\eta(t))e_1'J(t)e_1+\tilde{\mathrm{tr}}(J(t)=\phi(t)+(n-2)\gamma(t)$.  The condition \eqref{eh:7} is thus equivalent to
\[
\eta'(t)=(n-2)(1+\eta(t)-\gamma(t))- \phi(t)+\frac{X_{1}^{2}(t)+Y_{1}^{2}(t)}{X_{1}(t)Y_{1}(t)},
\]
and recalling the choice \eqref{eh:10} of $\gamma(t)$, we are led to the choice
\[
\phi(t)=-\frac{\eta'(t)}{n-1}+\frac{X_{1}^{2}(t)+Y_{1}^{2}(t)}{X_{1}(t)Y_{1}(t)}.
\]

For the above choice of $\phi(t)$ the condition \eqref{eh:11} is satisfied (it is just the hypothesis \eqref{eh:5}), thus concluding the proof of the theorem.

\end{proof}

\section{Corollaries and Remarks}\label{s:7}

Several remarks and corollaries are in order here.

\begin{remark}
In Theorem~\ref{t:all} we obtained a characterization of all Brownian co-adapted couplings for which the distance function $\rho$ is deterministic.  If the distance function $\rho$ is in addition a solution to an ordinary differential equation  of the form
\[
\rho'(t)=F(\rho(t))
\]
where $F:[0,\infty)\to\R$ is a $C^{1}$ function such that
\[
-(n-1)\sqrt{K}\tan(\sqrt{K}\rho/2)\le F(\rho)\le -(n-1)\sqrt{K}\tan(\sqrt{K}\rho/2)+\frac{2(n-1)\sqrt{K}}{\sin(\sqrt{K}\rho)},
\]
the construction given in the theorem also provides couplings $(X,Y)$ which are diffusions on the corresponding spaces.  The argument is simply the observation that the construction of the matrices $J(t),K(t)$ in this theorem depend only on $X(t)$ and $Y(t)$ and not on the time $t\ge0$. 
\end{remark}

Considering the case of the constant distance function $\rho$ in Theorem~\ref{t:10}, we obtain the following important result.

\begin{corollary}\label{c:1}
For any points $x,y\in \field{S}^{n-1}$ ($n\ge2$) there exists a co-adapted fixed-distance coupling, that is there exists a coupling $(X,Y)$ of Brownian motions on $\field{S}^{n-1}$ starting at $(x,y)$ for which $d(X(t),Y(t))=d(x,y)$ for all $t\ge0$.  Moreover, the joint processes $(X,Y)$ is a diffusion.
\end{corollary}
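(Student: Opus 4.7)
The plan is to apply Theorem~\ref{t:10} (with its parameter $n$ replaced by $n-1$, so that the sphere becomes $\field{S}^{n-1}$) to the constant function $\rho(t)\equiv d(x,y)$, and then to revisit the explicit construction in the proof of that theorem in order to read off the diffusion property.

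First I would dispose of the degenerate cases not covered by Theorem~\ref{t:10}. If $x=y$, the synchronous coupling $X=Y$ trivially works. If $x=-y$, the antipodal map is an isometry of $\field{S}^{n-1}$, so $Y(t)=-X(t)$ is a Brownian motion co-adapted to $X$, with $d(X(t),Y(t))=\pi$ for all $t\ge0$. In both cases the joint process is a measurable (indeed affine) function of the single diffusion $X$, hence itself a (degenerate) diffusion.

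In the generic case $x\ne\pm y$ one has $\rho(0)\in(0,\pi)$ and $\rho'(t)\equiv0$, and the inequality \eqref{e:301} (with $n$ replaced by $n-1$) reduces to
\[
-(n-2)\tan(\rho(0)/2)\le 0\le -(n-2)\tan(\rho(0)/2)+\frac{2(n-2)}{\sin(\rho(0))}.
\]
The left bound is immediate since $\tan(\rho(0)/2)\ge 0$. For $n\ge 3$ the right bound, after dividing by $n-2$, becomes $\tan(\rho(0)/2)\le 2/\sin(\rho(0))=1/(\sin(\rho(0)/2)\cos(\rho(0)/2))$, i.e.\ $\sin^{2}(\rho(0)/2)\le 1$, which is elementary; the case $n=2$ is covered by the rotation coupling $Y(t)=e^{i\theta}X(t)$ singled out in Theorem~\ref{t:10}. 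Theorem~\ref{t:10} thus produces the required co-adapted coupling $(X,Y)$ with $d(X(t),Y(t))\equiv d(x,y)$.

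The main point remaining, and the place that really requires the explicit form of the construction, is the diffusion claim. Here I would inspect the matrices $J(t)$ and $K(t)$ built at the end of the proof of Theorem~\ref{t:10}: they are determined by $X(t),Y(t)$, the orthogonal decomposition $\R^{n}=\mathrm{span}\{X(t),Y(t)\}\oplus\mathcal{V}(t)$, and the scalar $\gamma(t)=1-(\eta'(t)+(n-2)\eta(t))/(n-2)$, where $\eta(t)=X(t)'Y(t)$. With $\rho$ constant one has $\eta(t)\equiv\cos\rho(0)$ and $\eta'\equiv 0$, so $\gamma$ is a constant depending only on $d(x,y)$. Consequently $J(t)$ and $K(t)$ are time-independent measurable functions of the state $(X(t),Y(t))$ alone, and the coupled system \eqref{e:c:s} becomes a time-homogeneous It\^o SDE whose coefficients depend only on the current position. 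Standard SDE theory then identifies $(X,Y)$ as a diffusion on $\field{S}^{n-1}\times\field{S}^{n-1}$. The only subtlety to watch for is that the construction is well-defined only on the open set $\{(u,v)\in\field{S}^{n-1}\times\field{S}^{n-1}:u\ne\pm v\}$; this is automatic here since $d(X(t),Y(t))\equiv d(x,y)\in(0,\pi)$ for all $t\ge 0$.
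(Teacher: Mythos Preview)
Your proposal is correct and follows essentially the same route as the paper: the paper obtains the corollary by taking $\rho$ constant in Theorem~\ref{t:10}, and derives the diffusion property from the preceding remark, which is exactly your observation that the matrices $J(t),K(t)$ in the construction depend only on the current state $(X(t),Y(t))$. Your handling of the degenerate cases $x=\pm y$ and the explicit verification of \eqref{e:301} are details the paper leaves implicit.
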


\begin{remark}\label{r:tv}  After writing this article, the second author learned from a private discussion with Thierry L\'evy an alternate short proof of the corollary above in the case $n=2$, which we briefly present below.   Consider $O(3)$, the set of $3\times 3$ orthogonal matrices, and for each $x\in \field{S}^{2}$ consider the map $\pi_{x}:O(3)\to \field{S}^{2}$ given by $\pi_{x}(A)=Ax$.  Consider the standard left-right invariant metric on $O(3)$, and the Riemannian structure associated to it.   Denoting by $\Delta^{O(3)}$ and $\Delta^{\field{S}^{2}}$ the Laplacian on $O(3)$, respectively on $\field{S}^{2}$, it can be shown that $\Delta^{O(3)}(f\circ \pi_{x})=\Delta^{\field{S}^{2}}(f)$, and as a consequence, if $Z(t)$ is a Brownian motion on $O(3)$, then $Z(t)x$ is a Brownian motion on $\field{S}^{2}$.  For arbitrary points $x,y\in\field{S}^{2}$, choosing $X(t)=Z(t)x$ and $Y(t)=Z(t)y$, provides a co-adapted fixed-distance coupling $(X(t),Y(t))$ of Brownian motions on $\field{S}^{2}$.
\end{remark}

Considering $\rho(t)=e^{-kt/2}\rho(0)$, with $k\le n-1$ in Theorem \ref{t:10} we obtain the following.

\begin{corollary}
For any points $x,y\in\field{S}^{n}$ ($n\ge1$) with $x\ne\pm y$ and $0\le k\le n-1$, there exists a co-adapted coupling $(X(t),Y(t))$ of Brownian motions  on $\field{S}^{n}$ such that $d(X(t),Y(t))=e^{-kt/2}d(x,y)$ for all $t\ge0$.

For $k<0$ and $n>1$, there also exists such a  coupling, but it only satisfies $d(X(t),Y(t))=e^{-kt/2}d(x,y)$, $0\le t\le t_{0}$, for some $t_{0}<\infty$.
\end{corollary}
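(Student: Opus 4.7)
The plan is to invoke Theorem~\ref{t:10} with the candidate distance function $\rho(t)=e^{-kt/2}\rho(0)$, where $\rho(0)=d(x,y)\in(0,\pi)$. Since $\rho'(t)=-\tfrac{k}{2}\rho(t)$, everything reduces to checking the differential inequality \eqref{e:301} on the relevant time interval. The first step is to simplify the right-hand side of \eqref{e:301} via the half-angle identity
\[
\frac{2}{\sin \rho} - \tan\!\left(\frac{\rho}{2}\right)
= \frac{1-\sin^{2}(\rho/2)}{\sin(\rho/2)\cos(\rho/2)}
= \cot\!\left(\frac{\rho}{2}\right),
\]
so that \eqref{e:301} collapses to the cleaner bound $-(n-1)\tan(\rho/2)\le\rho'\le(n-1)\cot(\rho/2)$. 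Substituting $\rho'=-\tfrac{k}{2}\rho$ and dividing by $\rho/2$, this is equivalent to
\[
-(n-1)\,\frac{\cot(\rho/2)}{\rho/2}\;\le\; k \;\le\; (n-1)\,\frac{\tan(\rho/2)}{\rho/2}.
\]

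For $0\le k\le n-1$: since $\tan(x)/x\ge 1$ on $(0,\pi/2)$, the upper bound is at least $n-1\ge k$, while the lower bound is negative and $k\ge 0$. Hence both inequalities hold whenever $\rho(t)\in(0,\pi)$, and since $\rho$ is non-increasing we have $\rho(t)\in(0,\rho(0)]\subset(0,\pi)$ for all $t\ge 0$. Theorem~\ref{t:10} then directly yields the desired coupling on $[0,\infty)$.

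For $k<0$ and $n\ge 2$: the upper bound is trivial (its right-hand side is positive, $k<0$); the lower bound reads $|k|\rho(t)/2\le(n-1)\cot(\rho(t)/2)$, which must eventually fail because $\rho(t)=e^{|k|t/2}\rho(0)$ grows while $\cot(\rho/2)\to 0$ as $\rho\to\pi$. I would let $\rho^{*}\in(0,\pi)$ be the unique zero of the strictly decreasing function $(n-1)\cot(\rho/2)-|k|\rho/2$ (unique because it is strictly decreasing with value $+\infty$ at $0$ and $-|k|\pi/2$ at $\pi$), and set $t_{0}=\tfrac{2}{|k|}\log(\rho^{*}/\rho(0))$ when $\rho(0)<\rho^{*}$ (and $t_0=0$ otherwise). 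I then extend $\rho$ past $t_0$ by keeping it constant equal to $\rho(t_0)$; any constant value in $(0,\pi)$ trivially satisfies the inequality, since its left-hand side is non-positive and its right-hand side is positive. Applying Theorem~\ref{t:10} to this extended $\rho$ produces a coupling whose distance coincides with $e^{-kt/2}\rho(0)$ on $[0,t_{0}]$. The exclusion of $n=1$ is automatic from the last part of Theorem~\ref{t:10}, which forces the rotation (constant-distance) coupling on $\field{S}^{1}$. No real obstacle arises; the only ingredients beyond invoking Theorem~\ref{t:10} are the half-angle identity reducing the bound to $\cot(\rho/2)$ and the monotonicity argument identifying $\rho^{*}$.
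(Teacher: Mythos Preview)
Your proof is correct and follows the same approach as the paper, which simply states that the corollary is obtained ``considering $\rho(t)=e^{-kt/2}\rho(0)$, with $k\le n-1$, in Theorem~\ref{t:10}'' without verifying the inequality. You fill in the details the paper omits: the half-angle reduction of \eqref{e:301} to $-(n-1)\tan(\rho/2)\le\rho'\le(n-1)\cot(\rho/2)$, the elementary bound $\tan x/x\ge1$ handling the case $0\le k\le n-1$, and for $k<0$ the extension of $\rho$ by a constant past $t_0$ so that Theorem~\ref{t:10} (stated on $[0,\infty)$) applies.
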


Note that in the case  $k<0$ one cannot hope that the equality $d(X(t),Y(t))=e^{-kt/2}d(x,y)$ holds true for all times $t\ge 0$, since the exponential term is unbounded and the sphere $\field{S}^{n}$ is compact.

An interesting feature of the previous corollary is the fact that the maximum allowable value of $k$ is $n-1$.  It turns out that this has to do with the fact that the curvature of the sphere $\field{S}^n$ is 1 (more precisely the lower bound on the Ricci curvature is $n-1$, but the reasoning is not transparent from the extrinsic argument presented in the proof of the theorem.

\begin{remark}
Aside from the particular case when the function $\rho$ is constant, there are two other interesting particular cases of Theorem \ref{t:10}.  One is the extreme case in which the left inequality in the hypothesis \eqref{e:301} of the theorem is attained, namely
\begin{equation}\label{slow coupling}
\rho(t)=2\arcsin\left(e^{-(n-1)t/2}\sin(\rho(0)/2)\right), \qquad t\ge 0.
\end{equation}
The corresponding  coupling is a particular case of  shy coupling, in which the two processes do not couple, but approach each other exponentially fast.

The other case is the extreme case when the right side of the inequality \eqref{e:301} is attained, explicitly
\begin{equation}\label{fast coupling}
\rho(t)=2\arccos\left(e^{-(n-1)t/2}\cos(\rho(0)/2)\right), \qquad t\ge 0.
\end{equation}
The corresponding coupling is again a shy coupling (the processes do not couple in finite time), but it is a \emph{repulsive coupling}, in the sense that the distance between the two processes increases at an exponential rate to the maximum distance allowed on the sphere (the processes become antipodal in the limit).

Notice that the latter coupling is  related to the former coupling via the following simple observation.   If $(X,Y)$ is the coupling for which the distance function is given by \eqref{slow coupling}, then $(X(t),\tilde{Y}(t))$ with $\tilde{Y}(t)=-Y(t)$ is a coupling for which the distance function is given by \eqref{fast coupling}.
\end{remark}

\begin{remark}
For the case of the hyperbolic space $\field{H}^{n}$, we point out that for any function $\rho:[0,\infty)\to[0,\infty)$ for which there exists a co-adapted coupling of Brownian motions on $\field{H}^n$ with $d(X(t),Y(t))=\rho(t)$, from \eqref{eh:301} we get that
\[
2\mathrm{arcsinh}(e^{(n-1)t/2}\sinh(\rho(0)/2))\le\rho(t)\le 2\mathrm{arccosh}(e^{(n-1)t/2}\cosh(\rho(0)/2)),
\]
which in turn shows that
\[
\lim_{t\to\infty}\frac{\rho(t)}{t}=n-1.
\]
In particular, this shows that there cannot be an exponential growth at infinity for the distance function.

Moreover, any function $\rho(t)$ which can be realized as the distance between co-adapted Brownian motions must be increasing.  In particular, there are no fixed-distance couplings as in the case of the sphere or of the Euclidean space.

It is also interesting to point out that for $\rho(t)=e^{-kt}\rho(0)$, the inequality in \eqref{eh:301} is satisfied only for $k<0$ and for small $t$.  In turn, if we take $\rho(0)$ sufficiently small, we conclude that $k\le -(n-1)/2$.
\end{remark}

Though we studied the coupling of Brownian motions on the model spaces, we can extend part of the analysis to the case of constant curvature manifolds.  To state this, let $M$ be a manifold of constant curvature $K$ and assume that $i(M)$ is the injectivity radius of $M$.  If $\rho:[0,T)\to[0,i(M))$ is a continuous function such that \eqref{eg:1} then we can find a coupling of Brownian motions $(X,Y)$ such that $\rho(t)=d(X(t),Y(t))$.  The argument is simply based on the fact that $M$ is the quotient of $\field{M}_{K}^{n}$ by a discrete group of isometries.  Since the Brownian motions on $\field{M}_{K}^{n}$ stay within $i(M)$ distance, it turns out that their projections onto $M$ are Brownian motions staying within the cut locus of each other.


The initial motivation for writing the present article was to investigate the stochastic version of the Lion and Man problem presented in the introduction, and we conclude with an interpretation of our results in terms of it. Given a distance function $\rho$ satisfying the hypotheses of Theorem~\ref{t:all}, the Brownian Man can find a ``strategy'' to keep the Brownian Lion at distance $\rho(t)$ at time $t$.  In particular the Man can always find a strategy which keeps the Lion at fixed distance in the Euclidean space and on the sphere, which although it is trivial in the Euclidean case, it is completely non-trivial on the sphere. The Man can also find a strategy which increases his distance from the Lion in all model spaces.

While the Euclidean and the hyperbolic cases must be disappointing for the Lion (no distance-decreasing coupling), in the case of the sphere the Lion can find a strategy which brings him exponentially close in time to the Man, which should be sufficient for some practical purposes.

\textbf{Acknowledgements}.  We want to thank Wilfrid Kendall and Krzysztof Burdzy for several interesting discussions on the existence of fixed-distance coupling on the sphere which took place in the summer of 2009 while the first author visited University of Warwick.  This motivated us to undertake, extend and complete this program on manifolds, which will appear in a forthcoming paper.  The second author kindly thanks to Thierry L\'evy for his construction given in Remark \ref{r:tv}.

\providecommand{\href}[2]{#2}

\end{document}